\newcommand{\ud}{\mathrm{d}}
\newcommand{\ii}{\mathrm{i}}
\newcommand{\cH}{\mathcal{H}}
\theoremstyle{plain}
\newtheorem{theorem}{Theorem}[section]
\newtheorem{lemma}[theorem]{Lemma}
\newtheorem{corollary}[theorem]{Corollary}
\newtheorem{proposition}[theorem]{Proposition}
\theoremstyle{definition}
\newtheorem{remark}[theorem]{Remark}
\newtheorem*{remark*}{Remark}
\numberwithin{equation}{section}
\begin{document}

\title[Self-adjoint extensions with Friedrichs lower bound]
{Self-adjoint extensions \\ with Friedrichs lower bound}
\author[M.~Gallone]{Matteo Gallone}
\address[M.~Gallone]{International School for Advanced Studies -- SISSA \\ via Bonomea 265 \\ 34136 Trieste (Italy).}
\email{mgallone@sissa.it}
\author[A.~Michelangeli]{Alessandro Michelangeli}
\address[A.~Michelangeli]{Institute for Applied Mathematics, University of Bonn \\ Endenicher Allee 60 \\ 
D-53115 Bonn (Germany).}
\email{michelangeli@iam.uni-bonn.de}


\begin{abstract}
 We produce a simple criterion and a constructive recipe to identify those self-adjoint extensions of a lower semi-bounded symmetric operator on Hilbert space which have the same lower bound as the Friedrichs extension. Applications of this abstract result to a few instructive examples are then discussed.
\end{abstract}

\date{\today}

\subjclass[2000]{}
\keywords{Lower semi-bounded symmetric operators. Self-adjoint extensions. Friedrichs extensions.}

\thanks{Partially supported by 
the Istituto Nazionale di Alta Matematica (INdAM) and 
the Alexander von Humboldt foundation.}

\maketitle

\vspace{-0.3cm} 


\section{Motivation}\label{sec:motivation}

We start with a familiar example. In the Hilbert space $\cH=L^2(0,1)$ let us consider the densely defined, closed, and symmetric operator
\begin{equation}\label{eq:Slaplace01}
  S\;=\;-\frac{\ud^2}{\ud x^2}\,,\quad\mathcal{D}(S)\;=\;H^2_0(0,1)\;=\;
 \left\{f\in H^2(0,1)\left|\!
 \begin{array}{c}
  f(0)=0=f(1) \\
  f'(0)=0=f'(1)
 \end{array}\!\!
 \right.\right\}.
\end{equation}
$S$ is actually the operator closure of the negative Laplacian defined on $C^\infty_0(0,1)$.
Here and in the following $\mathcal{D}(R)$ denotes the domain of the operator $R$ acting on $\cH$, and if $R$ is symmetric we denote by
\begin{equation}
 \mathfrak{m}(R)\;:=\;\inf_{\substack{ f\in\mathcal{D}(R) \\  f\neq 0 }}\frac{\langle f,Rf\rangle}{\|f\|_2^2}\;\in[-\infty,+\infty)
\end{equation}
the largest lower bound of $R$. When $\mathfrak{m}(R)>-\infty$ one says that $R$ is semi-bounded from below. Poincar\'e inequality implies that $S$ is semi-bounded from below with
\begin{equation}
 \mathfrak{m}(S)\;=\;\pi^2\,.
\end{equation}

Now, $S$ is symmetric but not self-adjoint, for
\begin{equation}
 S^*\;=\;-\frac{\ud^2}{\ud x^2}\,,\qquad \mathcal{D}(S^*)\;=\;H^2(0,1)\,.
\end{equation}
Thus, $S$ admits a multiplicity (in fact, a four-real-parameter family) of distinct self-adjoint extensions, which are all restrictions of $S^*$. Among them, the Friedrichs extension $S_F$ is the one with domain
\begin{equation}\label{eq:DSFlaplacian}
 \mathcal{D}(S_F)\;=\;H^2(0,1)\cap H^1_0(0,1)\;=\;\big\{ f\in H^2(0,1)\,\big|\, f(0)=0=f(1) \big\}\,,
\end{equation}
namely the Dirichlet (negative) Laplacian. Let us recall that abstractly speaking the Friedrichs extension of a lower semi-bounded symmetric operator $S$ is the \emph{only} self-adjoint extension of $S$ with the property
\begin{equation}
 \mathcal{D}(S_F)\;\subset\;\mathcal{D}[S]\,,
\end{equation}
that is, with operator domain contained in the form domain of $S$. Here and in the following $\mathcal{D}[R]$ denotes the form domain of a lower semi-bounded symmetric operator $R$, or also of a self-adjoint operator $R$ (see, e.g., \cite[Chapt.~10]{schmu_unbdd_sa}); in the present case
\begin{equation}
 \mathcal{D}[S]\;=\;\overline{\mathcal{D}(S)}^{\|\,\|_{H^1}}=\;H^1_0(0,1)\,,
\end{equation}
and obviously $\mathcal{D}[S]=\mathcal{D}[S_F]$. 
It is also a general property of the Friedrichs extension the fact that $S_F\geqslant \widetilde{S}$ for any other $\widetilde{S}=\widetilde{S}^*\supset S$, namely $S_F$ is the largest of all the self-adjoint extensions of $S$ in the sense of operator form ordering.

As well known, as follows from \eqref{eq:DSFlaplacian}, 
$S_F$ is diagonalizable over an orthonormal basis $\{\sqrt{2}\sin n\pi x\,|\,n\in\mathbb{N}\}$ of eigenfunctions, with (simple and pure point) spectrum
\begin{equation}
 \sigma(S_F)\;=\;\{n^2\pi^2\,|\,n\in\mathbb{N}\}
\end{equation}
Thus,
\begin{equation}
 \mathfrak{m}(S_F)\;=\;\pi^2\;=\;\mathfrak{m}(S)\,,
\end{equation}
which actually expresses a completely general fact: the Friedrichs extension of a lower semi-bounded operator preserves the lower bound. Whereas self-adjoint extensions of $S$ cannot increase the lower bound, in general they decrease it. For instance, in the example under consideration, the extension $S_P$ with periodic boundary conditions, namely with domain
\begin{equation}
 \mathcal{D}(S_P)\;=\;\big\{ f\in H^2(0,1)\,\big|\, f(1)=f(0)\,,f'(1)=f'(0) \big\}\,,
\end{equation}
has spectrum
\begin{equation}
 \sigma(S_P)\;=\;\{n^2\pi^2\,|\,n\in\mathbb{N}_0\}\,,\qquad\textrm{whence}\qquad \mathfrak{m}(S_P)\;=\;0\,.
\end{equation}
Yet, the extension $S_A$ with anti-periodic boundary conditions, namely with
\begin{equation}
 \mathcal{D}(S_A)\;=\;\big\{ f\in H^2(0,1)\,\big|\, f(1)=-f(0)\,,f'(1)=-f'(0) \big\}\,,
\end{equation}
does preserve the lower bound of $S$. Indeed,
\begin{equation}
 \sigma(S_A)\;=\;\{(2n+1)^2\pi^2\,|\,n\in\mathbb{N}_0\}\,,\qquad\textrm{whence}\qquad \mathfrak{m}(S_P)\;=\;\pi^2\,.
\end{equation}

This occurrence is well known: a lower semi-bounded symmetric operator may admit self-adjoint extensions other than the Friedrichs, with the same bottom of the Friedrichs spectrum. Actually this is not typical of symmetric operators with deficiency index $2$ only, as was the case for $S$ here. In Section \ref{sec:applications} also examples with deficiency indices $1$ will be recalled and discussed. By standard direct sum, these examples also cover the case of infinite deficiency indices.

Now, while the possibility of non-Friedrichs self-adjoint extensions with the same Friedrichs lower bound is folk knowledge, we are not aware of an explicit operator-theoretic explanation of this phenomenon, nor of a  characterisation in terms of transparent conditions which, once they are met, allow to construct all extensions with such a feature.

In this note we present a simple criterion and a constructive recipe to identify those self-adjoint extensions of a lower semi-bounded symmetric operator on Hilbert space which have the same lower bound as the Friedrichs extension. The abstract main results, Theorems \ref{thm:main1}, \ref{thm:main2}, and \ref{thm:main3} below, are discussed in Section \ref{sec:abstractresults}, and illustrative concrete examples where such results can be applied to are then presented in Section \ref{sec:applications}.

\section{Abstract results}\label{sec:abstractresults}

Let $\cH$ be a Hilbert space (over $\mathbb{R}$ or $\mathbb{C}$, with scalar product $\langle\cdot,\cdot\rangle$ anti-linear in the first entry, and with norm $\|\,\|$) and let $S$ be a densely defined, symmetric, semi-bounded operator on $\cH$ with lower bound $\mathfrak{m}(S)$. $S$ in not necessarily closed. For clarity of the presentation we shall assume non-restrictively $\mathfrak{m}(S)>0$. This implies that $S_F^{-1}$ is everywhere defined and bounded on $\cH$. It will be clear both from this abstract discussion and from the applications in Section \ref{sec:applications} that the case of general finite $\mathfrak{m}(S)$ can be covered by suitably shifting $S$ to $S-\mu\mathbbm{1}$ with $\mu<\mathfrak{m}(S)$.

Unless such $S$ is already essentially self-adjoint, it admits non-trivial self-adjoint extensions. In this case $\ker S^*$, the deficiency space for $S$, is non-trivial either. Standard extension schemes produce convenient classifications of the whole family of extensions. It can be shown within the modern theory of boundary triplets \cite{Behrndt-Hassi-deSnoo-boundaryBook}, or equivalently the classical `universal' parametrization by Grubb \cite{Grubb-1968}, and in fact the very original extension theory by Kre{\u\i}n \cite{Krein-1947}, Vi\v{s}ik \cite{Vishik-1952}, and Birman \cite{Birman-1956}, that the extensions of $S$ can be labelled as follows.

\begin{theorem}\label{thm:KVB-parametrisation}
 There is a one-to-one correspondence between the  family of all self-adjoint extensions of  $S$ on $\cH$ and the family of the self-adjoint operators on Hilbert subspaces of $\ker S^*$.
\begin{itemize}
 \item[(i)] If $T$ is any such operator, in the correspondence $T\leftrightarrow S_T$ each self-adjoint extension $S_T$ of $S$ is given by
\begin{equation}\label{eq:ST}
\begin{split}
S_T\;&=\;S^*\upharpoonright\mathcal{D}(S_T) \\
\mathcal{D}(S_T)\;&=\;\left\{f+S_F^{-1}(Tv+w)+v\left|\!\!
\begin{array}{c}
f\in\mathcal{D}(\overline{S})\,,\;v\in\mathcal{D}(T) \\
w\in\ker S^*\cap\mathcal{D}(T)^\perp
\end{array}\!\!
\right.\right\}.
\end{split}
\end{equation}
\item[(ii)] If $S_T$ is a semi-bounded (not necessarily positive) self-adjoint extension of $S$, then
\begin{equation}\label{eq:D[SB]_Tversion}
\mathcal{D}[T]\;=\; \mathcal{D}[S_T]\,\cap\,\ker S^*
\end{equation}
and
 \begin{equation}\label{eq:decomposition_of_form_domains_Tversion}
 \begin{split}
 \mathcal{D}[S_T]\;&=\;\mathcal{D}[S_F]\,\dotplus\,\mathcal{D}[T] \\
 S_T[f+v,f'+v']\;&=\;S_F[f,f']\,+\,T[v,v'] \\
 &\forall f,f'\in\mathcal{D}[S_F],\;\forall v,v'\in\mathcal{D}[T]\,.
 \end{split}
\end{equation}
As a consequence,
\begin{equation}\label{eq:extension_ordering_Tversion}
S_{T_1}\,\geqslant\,S_{T_2}\qquad\Leftrightarrow\qquad T_1\,\geqslant\,T_2\,.
\end{equation}
\item[(iii)] If $\mathfrak{m}(T)>-\mathfrak{m}(S)$, then
 \begin{equation}\label{eq:bounds_mS_mB_Tversion}
 \mathfrak{m}(T)\;\geqslant\; \mathfrak{m}(S_T)\;\geqslant\;\frac{\mathfrak{m}(S) \,\mathfrak{m}(T)}{\mathfrak{m}(S)+\mathfrak{m}(T)}\,.
 \end{equation}
\end{itemize}
\end{theorem}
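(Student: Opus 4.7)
The plan is to build all three parts on top of the Kre{\u\i}n--Vi\v{s}ik--Birman decomposition
$$\mathcal{D}(S^*)\;=\;\mathcal{D}(\overline{S})\,\dotplus\,S_F^{-1}(\ker S^*)\,\dotplus\,\ker S^*,$$
which holds as an algebraic direct sum because $\mathfrak{m}(S)>0$ makes $S_F^{-1}$ everywhere defined and bounded on $\cH$ and simultaneously gives $\cH=\ran\,\overline{S}\oplus\ker S^*$. This identity is the engine for part (i) and at the same time isolates the $\ker S^*$-component that will carry the parameter $T$ in (ii) and (iii).

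For part (i), I would take an arbitrary self-adjoint extension $\widetilde{S}\subset S^*$, write each $u\in\mathcal{D}(\widetilde{S})$ uniquely as $u=f+S_F^{-1}\xi+v$ with $f\in\mathcal{D}(\overline{S})$ and $\xi,v\in\ker S^*$, and impose the symmetry relation $\langle u,\widetilde{S} u'\rangle=\langle \widetilde{S} u,u'\rangle$. Using $\widetilde{S} u=\overline{S} f+\xi$, that relation forces $\xi=Tv+w$ for a symmetric operator $T$ acting on a subspace $\mathcal{D}(T)\subset\ker S^*$ and a free component $w\in\ker S^*\cap\mathcal{D}(T)^\perp$. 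Maximality of $\widetilde{S}$ then turns out to be equivalent to self-adjointness of $T$ on $\overline{\mathcal{D}(T)}$ inside its ambient Hilbert subspace of $\ker S^*$; the converse direction---that every such $T$ yields a self-adjoint extension via \eqref{eq:ST}---is verified by direct symmetry and domain-maximality checks.

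For part (ii), I would pass from operator to form through the first representation theorem. A computation using $S^*S_F^{-1}=\mathbbm{1}$ and $w\perp\mathcal{D}(T)$ yields
$$\langle u,S_Tu\rangle\;=\;S_F[\,f+S_F^{-1}(Tv+w)\,]+T[v],$$
which places $\mathcal{D}(S_T)$ inside $\mathcal{D}[S_F]\dotplus\mathcal{D}[T]$, the sum being genuinely direct because $\mathcal{D}[S_F]\cap\ker S^*=\{0\}$ under $\mathfrak{m}(S)>0$. Closing in the form norm then yields \eqref{eq:decomposition_of_form_domains_Tversion}, and the ordering \eqref{eq:extension_ordering_Tversion} follows at once, since only the $T$-part differs between $S_{T_1}$ and $S_{T_2}$.

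For part (iii), I would apply the form decomposition to $u=f+v\in\mathcal{D}[S_T]$, start from $S_T[u]\geqslant\mathfrak{m}(S)\|f\|^2+\mathfrak{m}(T)\|v\|^2$, and look for the largest $c$ such that
$$\mathfrak{m}(S)\|f\|^2+\mathfrak{m}(T)\|v\|^2\,-\,c\,\|f+v\|^2\;\geqslant\;0.$$
Bounding the cross-term by $|\langle f,v\rangle|\leqslant\|f\|\|v\|$, this reduces to the non-negativity of a binary quadratic form in $(\|f\|,\|v\|)$, whose discriminant condition pins down exactly $c=\mathfrak{m}(S)\mathfrak{m}(T)/(\mathfrak{m}(S)+\mathfrak{m}(T))$; the hypothesis $\mathfrak{m}(T)>-\mathfrak{m}(S)$ is used precisely to keep the denominator positive. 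The matching upper estimate $\mathfrak{m}(S_T)\leqslant\mathfrak{m}(T)$ comes from testing $S_T[v]/\|v\|^2=T[v]/\|v\|^2$ on $v\in\mathcal{D}[T]\subset\mathcal{D}[S_T]$. The chief obstacle I anticipate is the bookkeeping in (i)---verifying that the correspondence $T\leftrightarrow S_T$ is truly bijective, with no degenerate subspace slipping through---whereas (ii) and (iii) become essentially mechanical once (i) is in hand.
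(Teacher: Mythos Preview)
The paper does not give its own proof of this theorem: immediately after the statement it writes ``Theorem \ref{thm:KVB-parametrisation} collects results that are proved, e.g., in \cite[Chapt.~13]{Grubb-DistributionsAndOperators-2009}, \cite[Chapt.~14]{schmu_unbdd_sa}, and \cite[Sect.~3]{GMO-KVB2017}'', and treats it as background. So there is nothing internal to compare against; your sketch is essentially the standard Kre{\u\i}n--Vi\v{s}ik--Birman argument one finds in those references, and the key identity you compute in (ii), $\langle u,S_Tu\rangle=S_F[\,f+S_F^{-1}(Tv+w)\,]+T[v]$, together with the discriminant bound in (iii), are exactly the mechanisms the paper later leans on (see the proof of Theorem~\ref{thm:boundsSTboundT}).
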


Theorem \ref{thm:KVB-parametrisation} collects results that are proved, e.g., in \cite[Chapt.~13]{Grubb-DistributionsAndOperators-2009}, \cite[Chapt.~14]{schmu_unbdd_sa}, and \cite[Sect.~3]{GMO-KVB2017}.

For convenience, let us denote by $\mathcal{S}(\mathcal{K})$ the collection of all self-adjoint operators defined in Hilbert subspaces of a given Hilbert space $\mathcal{K}$: Theorem \ref{thm:KVB-parametrisation} states that the self-adjoint extensions of $S$ are all of the form $S_T$ for some $T\in\mathcal{S}(\ker S^*)$.

The Friedrichs extension of $S$ can be expressed in terms of the classical decomposition formula (see, e.g., \cite[Sect.~2.2]{GMO-KVB2017})
\begin{equation}
 \mathcal{D}(S_F)\;=\;\mathcal{D}(\overline{S})\dotplus S_F^{-1}\ker S^*\,.
\end{equation}
Therefore, $S_F$ is recovered from the general parametrisation \eqref{eq:ST} or \eqref{eq:decomposition_of_form_domains_Tversion} with the choice $\mathcal{D}[T]=\{0\}$ (thus, formally, ``$T=\infty$'').

An ancillary result that tends to be somehow less highlighted, but which is most relevant for our discussion, is the following.

\begin{theorem}\label{thm:boundsSTboundT}
 If, with respect to the notation of \eqref{eq:ST}, $S_T$ is a self-adjoint extension of $S$, and if $\mu<\mathfrak{m}(S)$, then
\begin{equation}\label{eq:SBsmbb-iff-invBsmbb_Tversion}
\begin{split}
\langle g,S_T g\rangle\;&\geqslant\;\mu\,\|g\|^2\qquad\forall g\in\mathcal{D}(S_T) \\
& \Updownarrow \\
\langle v,T v\rangle\;\geqslant\;\mu\|v\|^2+\:&\mu^2\langle v,(S_F-\mu\mathbbm{1})^{-1} v\rangle\qquad\forall v\in\mathcal{D}(T)\,.
\end{split}
\end{equation}
\end{theorem}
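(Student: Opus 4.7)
The plan is to pass from the operator lower bound on $\mathcal{D}(S_T)$ to a quadratic-form lower bound on $\mathcal{D}[S_T]$, apply the form decomposition \eqref{eq:decomposition_of_form_domains_Tversion} of Theorem~\ref{thm:KVB-parametrisation}(ii), and then perform a completion-of-squares optimisation in the $\mathcal{D}[S_F]$-component. Throughout, set $A:=S_F-\mu\mathbbm{1}$: since $\mu<\mathfrak{m}(S)=\mathfrak{m}(S_F)$, the operator $A$ is strictly positive self-adjoint with bounded everywhere-defined inverse, which is what powers the square-completion below.

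First I would observe that, because $S_T$ is self-adjoint, the operator lower bound $\langle g,S_T g\rangle\geqslant\mu\|g\|^2$ on $\mathcal{D}(S_T)$ is equivalent (spectrally) to the form lower bound $S_T[g,g]\geqslant\mu\|g\|^2$ on $\mathcal{D}[S_T]$, so in either direction of the sought equivalence the semi-boundedness of $S_T$ (equivalently of $T$, via the Kre\u\i n--Vi\v{s}ik--Birman correspondence) is assured and Theorem~\ref{thm:KVB-parametrisation}(ii) applies. Decomposing a generic $g=f+v$ with $f\in\mathcal{D}[S_F]$ and $v\in\mathcal{D}[T]$, the form inequality reads
\begin{equation*}
 A[f,f]\,-\,2\mu\re\langle f,v\rangle\,+\,T[v,v]\,-\,\mu\|v\|^2\;\geqslant\;0
 \qquad\forall f\in\mathcal{D}[S_F],\;\forall v\in\mathcal{D}[T].
\end{equation*}

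The key algebraic identity is then the completion of the square
\begin{equation*}
 A[f,f]-2\mu\re\langle f,v\rangle\;=\;\bigl\|A^{1/2}f-\mu A^{-1/2}v\bigr\|^2\,-\,\mu^2\langle v,A^{-1}v\rangle,
\end{equation*}
which shows that the infimum over $f\in\mathcal{D}[S_F]$ of the left-hand side equals $-\mu^2\langle v,A^{-1}v\rangle$ and is attained at the admissible vector $f_v:=\mu A^{-1}v\in\mathcal{D}(S_F)\subset\mathcal{D}[S_F]$. In the direction $(\Rightarrow)$ one tests the form inequality at $g=f_v+v$ and obtains the claimed form bound on $T$; in the direction $(\Leftarrow)$ one plugs the identity into the form inequality and invokes the form bound on $T$ to conclude that the global expression is bounded below by $\|A^{1/2}f-\mu A^{-1/2}v\|^2\geqslant 0$. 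Either way one gets the equivalence with
\begin{equation*}
 T[v,v]\;\geqslant\;\mu\|v\|^2+\mu^2\langle v,(S_F-\mu\mathbbm{1})^{-1}v\rangle\qquad\forall v\in\mathcal{D}[T].
\end{equation*}

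I expect the only point requiring care to be the transit between the form domain $\mathcal{D}[T]$ (which is what the form decomposition naturally produces) and the operator domain $\mathcal{D}(T)$ (which is what appears in the statement): since $\mathcal{D}(T)$ is a form core for $T$ and the right-hand side $v\mapsto\mu\|v\|^2+\mu^2\langle v,A^{-1}v\rangle$ is $\|\cdot\|$-continuous (because $A^{-1}$ is bounded), the inequality on $\mathcal{D}(T)$ extends by density to $\mathcal{D}[T]$ and conversely. All the rest reduces to the single completion-of-squares identity above combined with the form decomposition provided gratis by Theorem~\ref{thm:KVB-parametrisation}(ii).
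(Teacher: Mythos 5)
Your proposal is correct and follows essentially the same route as the paper: both reduce the operator bound to the form inequality on $\mathcal{D}[S_F]\dotplus\mathcal{D}[T]$ via Theorem~\ref{thm:KVB-parametrisation}(ii) and then optimise out the $\mathcal{D}[S_F]$-component, the optimum producing the term $\mu^2\langle v,(S_F-\mu\mathbbm{1})^{-1}v\rangle$. The only cosmetic difference is that you complete the square in $A^{1/2}f$ directly, whereas the paper reaches the same extremal value through a two-parameter positivity (discriminant) condition combined with Lemma~\ref{lem:A_A-1}, whose proof is the very substitution $g=A^{1/2}f$ you use.
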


As an immediate consequence, Theorem \ref{thm:boundsSTboundT} reproduces the inequality $\mathfrak{m}(T)\geqslant \mathfrak{m}(S_T)$ for any semi-bounded $S_T$ and shows, in particular, that positivity or strict positivity of the bottom of $S_T$ is equivalent to the same property for $T$, that is,
 \begin{equation}\label{eq:positiveSBiffpositveB-1_Tversion}
 \begin{split}
 \mathfrak{m}(S_T)\;\geqslant \;0\quad&\Leftrightarrow\quad \mathfrak{m}(T)\;\geqslant\; 0 \\
 \mathfrak{m}(S_T)\;> \;0\quad&\Leftrightarrow\quad \mathfrak{m}(T)\;>\; 0\,.
 \end{split}
 \end{equation}


To make this presentation self-contained, and for later convenience, let us deduce Theorem \ref{thm:boundsSTboundT} from Theorem \ref{thm:KVB-parametrisation}. To this aim, let us first single out a simple operator-theoretic property.

\begin{lemma}\label{lem:A_A-1}
If $A$ is a self-adjoint operator on a Hilbert space $\cH$ with positive bottom ($\mathfrak{m}(A)>0$), then
\[
\sup_{f\in\mathcal{D}(A)}\frac{\;|\langle f,h\rangle|^2}{\langle f,Af\rangle}\;=\;\langle h,A^{-1}h\rangle\qquad\forall h\in\cH.
\]
\end{lemma}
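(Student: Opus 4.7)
The plan is to prove the stated identity by a matching two-sided estimate, using the Cauchy--Schwarz inequality for the upper bound and an explicit maximiser for the lower bound.

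For the inequality ``$\leqslant$'' I would use the functional calculus for $A$. Since $\mathfrak{m}(A)>0$, the square root $A^{1/2}$ is a self-adjoint operator with $\mathcal{D}(A)\subset\mathcal{D}(A^{1/2})$, while $A^{-1/2}$ is bounded and everywhere defined on $\cH$, with range contained in $\mathcal{D}(A^{1/2})$. For every $f\in\mathcal{D}(A)$ with $f\neq 0$ and every $h\in\cH$ one has the factorisation
\[
\langle f,h\rangle\;=\;\langle A^{1/2}f,A^{-1/2}h\rangle,
\]
so that Cauchy--Schwarz in $\cH$ gives $|\langle f,h\rangle|^2\leqslant\|A^{1/2}f\|^2\,\|A^{-1/2}h\|^2=\langle f,Af\rangle\,\langle h,A^{-1}h\rangle$. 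Since $\langle f,Af\rangle\geqslant \mathfrak{m}(A)\|f\|^2>0$ for $f\neq 0$, division by $\langle f,Af\rangle$ and passage to the supremum in $f$ yields the desired upper bound $\langle h,A^{-1}h\rangle$.

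For the reverse inequality, I would exhibit an actual maximiser. The key observation is that $A^{-1}$ is bounded on $\cH$ with $\mathrm{ran}\,A^{-1}=\mathcal{D}(A)$, so the choice $f:=A^{-1}h$ is admissible in the supremum (whenever $h\neq 0$, for otherwise both sides vanish trivially). A direct computation then yields $\langle f,h\rangle=\langle A^{-1}h,h\rangle=\langle h,A^{-1}h\rangle$ and $\langle f,Af\rangle=\langle A^{-1}h,h\rangle=\langle h,A^{-1}h\rangle$, so that the ratio equals $\langle h,A^{-1}h\rangle$ and the supremum is in fact attained.

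The step that deserves the most care --- though it is not a genuine obstacle --- is the domain bookkeeping: one must check that $A^{1/2}f$ makes sense for $f\in\mathcal{D}(A)$, that $A^{-1/2}h$ lies in $\mathcal{D}(A^{1/2})$ for arbitrary $h\in\cH$, and that the proposed maximiser $A^{-1}h$ really lies in $\mathcal{D}(A)$. All three are immediate consequences of the hypothesis $\mathfrak{m}(A)>0$, which via the bounded functional calculus furnishes the bounded inverses $A^{-1}$ and $A^{-1/2}$ on $\cH$ with the required ranges. Beyond this, the argument is simply Cauchy--Schwarz in disguise, with the optimality certified by the explicit choice $f=A^{-1}h$.
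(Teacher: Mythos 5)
Your proof is correct and is essentially the paper's own argument in a slightly unfolded form: the paper substitutes $g=A^{1/2}f$ and invokes the duality $\sup_{\|g\|=1}|\langle g,A^{-1/2}h\rangle|^2=\|A^{-1/2}h\|^2$, which is exactly your Cauchy--Schwarz bound together with its saturation, and your maximiser $f=A^{-1}h$ is the paper's $g=A^{-1/2}h/\|A^{-1/2}h\|$ pulled back through that substitution. Your version is, if anything, a bit more explicit about the domain bookkeeping.
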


\begin{proof}
Setting $g:=A^{1/2}f$ one has
\[
\sup_{f\in\mathcal{D}(A)}\frac{\;|\langle f,h\rangle|^2}{\langle f,Af\rangle}\;=\;\sup_{g\in\cH}\frac{\;|\langle A^{-1/2}g,h\rangle|^2}{\|g\|^2}\;=\;\sup_{\|g\|=1}|\langle g,A^{-1/2}h\rangle|^2
\]
and since $|\langle g,A^{-1/2}h\rangle|$ attains its maximum for $g=A^{-1/2}h/\|A^{-1/2}h\|$, the conclusion then follows.
\end{proof}

\begin{proof}[Proof of Theorem \ref{thm:boundsSTboundT}]
 For generic $f\in\mathcal{D}(S_F)$ and $v\in\mathcal{D}(T)$, one has $g:=f+v\in\mathcal{D}(S_T)$ and
 \[
  S_T[g]\;=\;\langle f,S_F f\rangle + \langle v, T v\rangle\,.
 \]
 Thus, $S_T\geqslant \mu\mathbbm{1}$ is tantamount as requiring for all such $g$'s that
\[
\langle f,S_F f\rangle+\langle v,T v\rangle\;\geqslant\;\mu\,\big( \langle f,f\rangle+\langle f,v\rangle+\langle v,f\rangle+\langle v,v\rangle\big)
\]
whence also, replacing $f\mapsto\lambda f$, $v\mapsto\gamma v$,
\[
\begin{split}
\big(\langle f,S_F f\rangle&-\mu\|f\|^2\big)\,|\lambda|^2-\mu\langle f,v\rangle\overline{\lambda}\gamma-\mu\langle v,f\rangle \lambda\overline{\gamma} \\
&\qquad +\big(\langle v,T v\rangle-\mu\|v\|^2\big)\,|\gamma|^2\;\geqslant\;0\qquad\forall \lambda,\gamma\in\mathbb{C}\,.
\end{split}
\]
Since $\mu<\mathfrak{m}(S)$, and hence $\langle f,S_F f\rangle-\mu\|f\|^2> 0$, last inequality holds true if and only if
\[\tag{*}\label{eq:iffineq}
\mu^2|\langle f,v\rangle|^2\;\leqslant\;\big(\langle v,T v\rangle-\mu\|v\|^2\big)\,\big(\langle f,S_F f\rangle-\mu\|f\|^2\big)
\]
for arbitrary $f\in\mathcal{D}(S_F)$ and $v\in\mathcal{D}(T)$. By re-writing \eqref{eq:iffineq} as
\[
\langle v,T v\rangle-\mu\|v\|^2 \;\geqslant\;\mu^2\,\frac{|\langle f,v\rangle|^2}{\langle f,(S_F-\mu\mathbbm{1})f\rangle}
\]
and by the fact that the above inequality is valid for arbitrary $f\in\mathcal{D}(S_F)$ and hence holds true also when the supremum over such $f$'s is taken, Lemma \ref{lem:A_A-1} then yields
\[
\langle v,T v\rangle-\mu\|v\|^2 \;\geqslant\;\mu^2\langle v,(S_F-\mu\mathbbm{1})^{-1} v\rangle\,,
\]
 which completes the proof. 
\end{proof}

With these abstract results at hand, let us now turn to the identification of the non-Friedrichs extensions with the same Friedrichs lower bound.

It is worth observing that inequality \eqref{eq:bounds_mS_mB_Tversion} is not informative in this respect: indeed, owing to \eqref{eq:bounds_mS_mB_Tversion}, a \emph{sufficient} condition for the bottom of $S_T$ to equal the bottom of $S_F$ would be to impose $\mathfrak{m}(S)\mathfrak{m}(T)/(\mathfrak{m}(S)+\mathfrak{m}(T))\geqslant \mathfrak{m}(S)$, but such inequality is only satisfied, in the form of an identity, when $\mathfrak{m}(T)=\infty$, therefore the above sufficient condition only selects $S_T=S_F$, the Friedrichs extension.

We rather focus on \eqref{eq:SBsmbb-iff-invBsmbb_Tversion} from Theorem \ref{thm:boundsSTboundT}. There, the operator $S_F-\mu\mathbbm{1}$ is invertible with everywhere bounded inverse on the whole $\cH$: indeed, $\mu<\mathfrak{m}(S)$ and then $\mathfrak{m}(S_F-\mu\mathbbm{1})>0$. Instead, $S_F-\mathfrak{m}(S)\mathbbm{1}$ fails to be invertible on $\cH$, because its bottom is by construction equal to zero.

The informal idea now is that even if $(S_F-\mathfrak{m}(S)\mathbbm{1})^{-1}$ cannot be defined as a bounded operator \emph{on the whole} $\cH$, yet it makes sense on $\mathrm{ran}(S_F-\mathfrak{m}(S)\mathbbm{1})$, and if it happens that the latter space has a non-trivial intersection with $\ker S^*$, then there are non-zero vectors $v\in \mathrm{ran}(S_F-\mathfrak{m}(S)\mathbbm{1})\cap\ker S^*$ on which $\langle v,(S_F-\mathfrak{m}(S)\mathbbm{1})^{-1}v\rangle$ is unambiguously defined and hence the right-hand side of the second expression in \eqref{eq:SBsmbb-iff-invBsmbb_Tversion} is meaningful also when $\mu=\mathfrak{m}(S)$. Moreover, if on such $v$'s one can define an operator $T\in\mathcal{S}(\ker S^*)$ satisfying \eqref{eq:SBsmbb-iff-invBsmbb_Tversion}  when $\mu=\mathfrak{m}(S)$, then by suitably exploiting the limit $\mu\uparrow \mathfrak{m}(S)$ this should give a characterisation of the condition $\mathfrak{m}(S_T)\geqslant \mathfrak{m}(S)$, which is equivalent to $\mathfrak{m}(S_T)= \mathfrak{m}(S)$, as $S_F\geqslant S_T$, the Friedrichs extension is the largest of all self-adjoint extensions of $S$.

By elaborating on such idea we finally come to our main results, Theorems \ref{thm:main1}, \ref{thm:main2}, and \ref{thm:main3} below.

Clearly, underlying \eqref{eq:SBsmbb-iff-invBsmbb_Tversion} is the quadratic form language, so the actual operator to possibly invert in some subspace of $\ker S^*$ is rather $(S_F-\mathfrak{m}(S)\mathbbm{1})^{1/2}$, a positive self-adjoint operator with zero lower bound.

In this respect, as $S_F$ is self-adjoint on $\cH$, and so is $S_F-\mathfrak{m}(S)\mathbbm{1}$ with lower bound zero, then upon decomposing 
\[
 \cH\;=\;\overline{\mathrm{ran}(S_F-\mathfrak{m}(S)\mathbbm{1})}\oplus\ker(S_F-\mathfrak{m}(S)\mathbbm{1})
\]
the negative powers $(S_F-\mathfrak{m}(S)\mathbbm{1})^{-\delta}$, $\delta>0$, are naturally defined as self-adjoint operators on the Hilbert subspace $\overline{\mathrm{ran}(S_F-\mathfrak{m}(S)\mathbbm{1})}$, or also on the whole $\cH$ upon extension by zero on $\ker(S_F-\mathfrak{m}(S)\mathbbm{1})$.

In the first statement we \emph{characterise} the occurrence of non-Friedrichs extensions with the same Friedrichs lower bound.

\begin{theorem}\label{thm:main1}
 Let $S$ be a densely defined and symmetric operator on a given Hilbert space $\cH$ with lower bound $\mathfrak{m}(S)>0$. Necessary and sufficient condition for $S$ to admit self-adjoint extensions other then the Friedrichs extensions and with the same lower bound $\mathfrak{m}(S)$ is that 
 \begin{equation}\label{eq:maincond}
  \mathrm{ran}(S_F-\mathfrak{m}(S)\mathbbm{1})^{1/2}\cap\ker S^*\;\neq\;\{0\}\,.
 \end{equation}
\end{theorem}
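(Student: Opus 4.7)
My approach is to translate the condition $\mathfrak{m}(S_T)=\mathfrak{m}(S)$ into a condition on the parameter $T$ via Theorem \ref{thm:boundsSTboundT}, and then to understand its limiting behaviour as $\mu\uparrow m:=\mathfrak{m}(S)$. Since $S_F$ is the largest self-adjoint extension of $S$, for every extension one has $\mathfrak{m}(S_T)\leqslant m$, so $\mathfrak{m}(S_T)=m$ is equivalent to $S_T\geqslant\mu\mathbbm{1}$ for all $\mu<m$, and by Theorem \ref{thm:boundsSTboundT} this is equivalent to the family of inequalities
\begin{equation*}
 \langle v,Tv\rangle\;\geqslant\;\mu\|v\|^2+\mu^2\langle v,(S_F-\mu\mathbbm{1})^{-1}v\rangle\qquad\forall\mu<m,\;\forall v\in\mathcal{D}(T).
\end{equation*}
Via the spectral calculus of $S_F$, writing $E$ for its resolution of the identity, the quadratic form $\mu\mapsto \langle v,(S_F-\mu\mathbbm{1})^{-1}v\rangle=\int_{[m,\infty)}(t-\mu)^{-1}d\|E(t)v\|^2$ is monotonically non-decreasing in $\mu<m$, and monotone convergence identifies its supremum as $\mu\uparrow m$ with $\int_{[m,\infty)}(t-m)^{-1}d\|E(t)v\|^2$, which is finite precisely when $v\in\mathrm{ran}(S_F-m\mathbbm{1})^{1/2}$.

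For sufficiency, I would pick any non-zero $v_0$ in the intersection \eqref{eq:maincond}, compute the (finite) number
\begin{equation*}
 \lambda_*\;:=\;m\,+\,\frac{m^2\langle v_0,(S_F-m\mathbbm{1})^{-1}v_0\rangle}{\|v_0\|^2}\,,
\end{equation*}
and let $T\in\mathcal{S}(\ker S^*)$ be the rank-one self-adjoint operator with $\mathcal{D}(T)=\mathrm{span}\{v_0\}$ and $Tv_0=\lambda v_0$ for some $\lambda\geqslant\lambda_*$. The monotonicity above gives $\lambda\|v_0\|^2\geqslant\mu\|v_0\|^2+\mu^2\langle v_0,(S_F-\mu\mathbbm{1})^{-1}v_0\rangle$ for every $\mu<m$, so Theorem \ref{thm:boundsSTboundT} yields $S_T\geqslant\mu\mathbbm{1}$ for all such $\mu$, hence $\mathfrak{m}(S_T)\geqslant m$. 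Combined with $\mathfrak{m}(S_T)\leqslant m$ this forces equality, while $\mathcal{D}(T)\neq\{0\}$ ensures $S_T\neq S_F$.

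For necessity, I would start with an extension $S_T\neq S_F$ satisfying $\mathfrak{m}(S_T)=m$. By Theorem \ref{thm:KVB-parametrisation} the Friedrichs extension corresponds to $\mathcal{D}[T]=\{0\}$, so $S_T\neq S_F$ forces $\mathcal{D}[T]\neq\{0\}$, and therefore $\mathcal{D}(T)\neq\{0\}$ since the operator domain is a core for the form. Pick any non-zero $v_0\in\mathcal{D}(T)\subset\ker S^*$ and apply Theorem \ref{thm:boundsSTboundT} for every $\mu<m$ to obtain
\begin{equation*}
 \mu^2\langle v_0,(S_F-\mu\mathbbm{1})^{-1}v_0\rangle\;\leqslant\;\langle v_0,Tv_0\rangle-\mu\|v_0\|^2,
\end{equation*}
whose right-hand side stays bounded as $\mu\uparrow m$. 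By the spectral-theoretic identification recalled in the first paragraph this forces $v_0\in\mathrm{ran}(S_F-m\mathbbm{1})^{1/2}$, establishing \eqref{eq:maincond}. The only delicate point in this plan is precisely this identification of $\mathrm{ran}(S_F-m\mathbbm{1})^{1/2}$ with the (extended) form domain of $(S_F-m\mathbbm{1})^{-1}$, which also explains why the square root of the range, rather than the range itself, appears in the statement; all other steps are straightforward applications of the theorems already established.
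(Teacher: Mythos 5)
Your proof is correct and follows essentially the same route as the paper: both directions rest on Theorem \ref{thm:boundsSTboundT} together with the monotonicity in $\mu$ of $\langle v,(S_F-\mu\mathbbm{1})^{-1}v\rangle$ and monotone convergence to identify the limiting condition as $\mu\uparrow\mathfrak{m}(S)$ with membership in $\mathrm{ran}(S_F-\mathfrak{m}(S)\mathbbm{1})^{1/2}$. The only difference is that for sufficiency you exhibit a single rank-one parameter $T$ on $\mathrm{span}\{v_0\}$ with eigenvalue at least $\lambda_*$, whereas the paper obtains sufficiency as a by-product of Theorem \ref{thm:main2}(iii) (any $T\geqslant T_q$); your choice is precisely the restriction of the paper's $T_q$ to a one-dimensional subspace, so nothing is lost for Theorem \ref{thm:main1} itself.
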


In the applications both $\ker S^*$ and $\mathrm{ran}(S_F-\mathfrak{m}(S)\mathbbm{1})^{1/2}$ are in general spaces that one can qualify rather explicitly. Thus, condition \eqref{eq:maincond} is practically manageable and qualify the operator-theoretic \emph{mechanism} for non-Friedrichs extensions with the Friedrichs lower bound. In Section \ref{sec:applications} we shall give examples of that.

Our next result concerns the actual \emph{recipe} to construct such extensions, when \eqref{eq:maincond} is matched, thus in practice how to identify the corresponding extension parameters $T$ in $\mathcal{S}(\ker S^*)$. We shall use the customary notation of square brackets for the domain $\mathcal{D}[q]$ of a quadratic form $q$ on $\cH$ and for the evaluation $q[v]$ on elements of its domain; as usual, we shall denote by $q[v_1,v_2]$ the evaluation of the corresponding sesquilinear form defined by polarisation.

\begin{theorem}\label{thm:main2}
 Same assumptions as in Theorem \ref{thm:main1}, and assume further that condition \eqref{eq:maincond} is satisfied.
\begin{itemize}
 \item[(i)] The expression 
 \begin{equation}\label{eq:defq}
  \begin{split}
   \mathcal{D}[q]\;&:=\;\mathrm{ran}(S_F-\mathfrak{m}(S)\mathbbm{1})^{\frac{1}{2}}\cap\ker S^* \\
   q[v]\;&:=\;\mathfrak{m}(S)\|v\|^2+\mathfrak{m}(S)^2\big\| (S_F-\mathfrak{m}(S)\mathbbm{1})^{-\frac{1}{2}}v\big\|^2\,.
  \end{split}
 \end{equation}
 defines a symmetric, closed, and strictly positive quadratic form $q$. 
 \item[(ii)] Let $T_q$ be the operator on the Hilbert subspace $\overline{\mathcal{D}[q]}$ uniquely associated with $q$. Then $T_q\in\mathcal{S}(\ker S^*)$.
 \item[(iii)] For any $T\in\mathcal{S}(\ker S^*)$ with $T\geqslant T_q$, the corresponding self-adjoint extension $S_T$ of $S$ (Theorem \ref{thm:KVB-parametrisation}) has the property
 \begin{equation}\label{eq:samebottom}
  \mathfrak{m}(S_T)\;=\;\mathfrak{m}(S)\,.
 \end{equation}
 \item[(iv)] Any self-adjoint extension $S_T$ of $S$ satisfying \eqref{eq:samebottom} corresponds to an extension parameter $T\in\mathcal{S}(\ker S^*)$ with $T\geqslant T_q$.
\end{itemize}
\end{theorem}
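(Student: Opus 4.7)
The plan is to treat the four parts in order, with parts (iii) and (iv) handled together as twin statements that hinge on one spectral-calculus identity. For part (i), I would first observe that $v\mapsto\|(S_F-\mathfrak{m}(S)\mathbbm{1})^{-1/2}v\|^2$ is the quadratic form naturally associated on $\overline{\mathrm{ran}(S_F-\mathfrak{m}(S)\mathbbm{1})}$ with the non-negative self-adjoint operator $(S_F-\mathfrak{m}(S)\mathbbm{1})^{-1}$ (extended by zero on $\ker(S_F-\mathfrak{m}(S)\mathbbm{1})$); as such, it is densely defined and closed on $\mathrm{ran}(S_F-\mathfrak{m}(S)\mathbbm{1})^{1/2}$. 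Adding the continuous form $\mathfrak{m}(S)\|\cdot\|^2$ (with $\mathfrak{m}(S)>0$) and restricting to the closed subspace $\ker S^*$ preserves both closedness and the strict lower bound $\mathfrak{m}(S)$, giving part (i). Part (ii) is then immediate from the form representation theorem applied to $q$ in the Hilbert subspace $\overline{\mathcal{D}[q]}\subset\ker S^*$: this yields a self-adjoint $T_q$ on $\overline{\mathcal{D}[q]}$, which by construction belongs to $\mathcal{S}(\ker S^*)$.

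The crux of the argument for (iii) and (iv) is the spectral identity
\[
 \mu\|v\|^2+\mu^2\langle v,(S_F-\mu\mathbbm{1})^{-1}v\rangle\;=\;\int_{[\mathfrak{m}(S),+\infty)}\!\frac{\mu\lambda}{\lambda-\mu}\,\ud\|E_\lambda v\|^2\,,
\]
where $E$ denotes the spectral measure of $S_F$. For each fixed $\lambda\geqslant\mathfrak{m}(S)$ the integrand is monotonically increasing in $\mu\in(0,\mathfrak{m}(S))$ with limit $\mathfrak{m}(S)\lambda/(\lambda-\mathfrak{m}(S))$; hence monotone convergence gives $\lim_{\mu\uparrow\mathfrak{m}(S)}[\,\mu\|v\|^2+\mu^2\langle v,(S_F-\mu\mathbbm{1})^{-1}v\rangle\,]=q[v]$ whenever $v\in\mathcal{D}[q]$ and $+\infty$ otherwise. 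To prove (iii), fix $T\geqslant T_q$ and $v\in\mathcal{D}(T)\subset\mathcal{D}[T]\subset\mathcal{D}[T_q]=\mathcal{D}[q]$: then $\langle v,Tv\rangle=T[v]\geqslant q[v]$ dominates the left-hand side above for every $\mu<\mathfrak{m}(S)$, so Theorem \ref{thm:boundsSTboundT} yields $S_T\geqslant\mu\mathbbm{1}$, whence $\mathfrak{m}(S_T)\geqslant\mathfrak{m}(S)$ after $\mu\uparrow\mathfrak{m}(S)$; the reverse inequality is automatic since $S_F\geqslant S_T$ by Theorem \ref{thm:KVB-parametrisation}(ii). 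Conversely, for (iv), given $S_T$ with $\mathfrak{m}(S_T)=\mathfrak{m}(S)$, applying Theorem \ref{thm:boundsSTboundT} at each $\mu<\mathfrak{m}(S)$ and passing to the sup yields $\langle v,Tv\rangle\geqslant q[v]$ for every $v\in\mathcal{D}(T)$, and finiteness of the left-hand side forces $v\in\mathcal{D}[q]$.

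The main obstacle I anticipate is the upgrade of the pointwise inequality $T[v]\geqslant q[v]$ from the operator domain $\mathcal{D}(T)$ to the whole form domain $\mathcal{D}[T]$, which is what the ordering $T\geqslant T_q$ in $\mathcal{S}(\ker S^*)$ requires. My plan is to approximate any $v\in\mathcal{D}[T]$ by a form-Cauchy sequence in $\mathcal{D}(T)$, use the pointwise inequality to deduce that the sequence is Cauchy also for $q$, and then invoke the closedness of $q$ established in part (i) to conclude $v\in\mathcal{D}[q]$ with $T[v]\geqslant q[v]$. A secondary technical point is the bookkeeping of the possible contribution of $\ker(S_F-\mathfrak{m}(S)\mathbbm{1})\cap\ker S^*$ to $\mathcal{D}[q]$, which is handled by the extension-by-zero convention for $(S_F-\mathfrak{m}(S)\mathbbm{1})^{-1/2}$ fixed in the discussion preceding Theorem \ref{thm:main1}.
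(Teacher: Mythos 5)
Your proposal is correct and follows essentially the same route as the paper: closedness of $q$ via the closed operator $(S_F-\mathfrak{m}(S)\mathbbm{1})^{-1/2}$ and the closedness of $\ker S^*$, the representation theorem for $T_q$, and then Theorem \ref{thm:boundsSTboundT} combined with the monotone limit $\mu\uparrow\mathfrak{m}(S)$ in the spectral representation of $S_F$ for parts (iii) and (iv). The only substantive difference is your explicit closure argument upgrading $\langle v,Tv\rangle\geqslant q[v]$ from $\mathcal{D}(T)$ to $\mathcal{D}[T]$, a step the paper's proof of (iv) passes over silently; your treatment of it is sound and slightly more careful.
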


By definition, in \eqref{eq:defq} the vector $u_\circ=(S_F-\mathfrak{m}(S)\mathbbm{1})^{-\frac{1}{2}}v$ is the minimal norm solution $u=u_\circ$ to $(S_F-\mathfrak{m}(S)\mathbbm{1})^{\frac{1}{2}}u=v$.

In view of the general classification of Theorem \ref{thm:KVB-parametrisation}, the above results admit a natural corollary that it is worth stating as a separate theorem. It is convenient to introduce the meaningful terminology `\emph{top extensions}' for all those $S_T$'s with $\mathfrak{m}(S_T)=\mathfrak{m}(S)$ (in particular, $S_F$ is a top extension), and `\emph{least-top extension}' for the extension $S_{LT}:=S_{T_q}$.

\begin{theorem}\label{thm:main3}
 Same assumptions as in Theorem \ref{thm:main1}. Each top extension $S^{\mathrm{top}}$ of $S$ satisfies
 \begin{equation}
  S_F\;\geqslant\;S^{\mathrm{top}}\;\geqslant\;S_{LT}
 \end{equation}
 in the sense of operator form ordering. Each such extension is of the form $S^{\mathrm{top}}=S_T$ for some $T\in\mathcal{S}(\ker S^*)$ with $T\geqslant T_q$, where $T_q$ is qualified in Theorem \ref{thm:main2}(ii), and they are all ordered with $T$ in the sense of \eqref{eq:extension_ordering_Tversion}.

\end{theorem}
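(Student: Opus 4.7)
The plan is to obtain Theorem \ref{thm:main3} as a structural corollary of Theorem \ref{thm:main2} and of the parametrisation/ordering supplied by Theorem \ref{thm:KVB-parametrisation}. The statement packages three assertions: (a) every top extension sits between $S_{LT}$ and $S_F$ in the form order; (b) every top extension has the form $S_T$ for some $T\in\mathcal{S}(\ker S^*)$ with $T\geqslant T_q$; (c) all top extensions are mutually comparable through the ordering of their parameters $T$. I would handle them in the order (b), then (c), then (a).

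For (b) I would simply invoke Theorem \ref{thm:main2}(iv), which asserts precisely this correspondence, and observe that the operator $T_q$ on $\overline{\mathcal{D}[q]}$ is guaranteed to exist by Theorem \ref{thm:main2}(i)--(ii) under the standing hypothesis \eqref{eq:maincond}. In the degenerate case where \eqref{eq:maincond} fails, $\mathcal{D}[q]=\{0\}$ and $T_q$ reduces to the trivial operator on the zero subspace, so that $S_{LT}=S_F$ and the only top extension is $S_F$ itself, rendering the statement automatically true.

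For (c) and the lower inequality in (a) I would appeal directly to \eqref{eq:extension_ordering_Tversion}: any $T\geqslant T_q$ in $\mathcal{S}(\ker S^*)$ yields $S_T\geqslant S_{T_q}=S_{LT}$, and the very same equivalence provides the mutual ordering of any two top extensions $S_{T_1},S_{T_2}$ through their parameters. For the upper inequality $S_F\geqslant S^{\mathrm{top}}$ I would invoke the general maximality of the Friedrichs extension in the sense of operator form ordering, recalled in Section \ref{sec:motivation}; equivalently, within the parametrisation \eqref{eq:decomposition_of_form_domains_Tversion}, $S_F$ corresponds to $\mathcal{D}[T]=\{0\}$, which is the top element of $\mathcal{S}(\ker S^*)$ in the form ordering.

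I do not foresee any genuine obstacle, since all three ingredients have been prepared upstream; the only point that requires a bit of care is the interpretation of the inequality $T\geqslant T_q$ when $T$ and $T_q$ live on different Hilbert subspaces of $\ker S^*$. It has to be read as the standard form ordering on $\mathcal{S}(\ker S^*)$ employed in Theorem \ref{thm:KVB-parametrisation}(ii), namely $\mathcal{D}[T]\subseteq\mathcal{D}[T_q]=\mathcal{D}[q]$ together with $T[v]\geqslant T_q[v]$ for $v\in\mathcal{D}[T]$; with this convention fixed, the argument closes cleanly and Theorem \ref{thm:main3} reduces to an assembly of previously proved facts.
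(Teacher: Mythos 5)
Your proposal is correct and matches the paper's treatment: the paper offers no separate proof of Theorem \ref{thm:main3}, presenting it precisely as the assembly you describe --- part (iv) of Theorem \ref{thm:main2} for the parametrisation $T\geqslant T_q$, the equivalence \eqref{eq:extension_ordering_Tversion} for the lower bound $S^{\mathrm{top}}\geqslant S_{LT}$ and the ordering, and the general maximality of the Friedrichs extension for the upper bound. Your remark on reading $T\geqslant T_q$ as form ordering on $\mathcal{S}(\ker S^*)$ (domain inclusion plus the form inequality) is exactly the convention the paper uses in its proof of Theorem \ref{thm:main2}(iii)--(iv).
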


\begin{proof}[Proof of Theorem \ref{thm:main1}, necessity part]~
 
 Let $S_T$ be a self-adjoint extension of $S$, labelled by some $T\in\mathcal{S}(\ker S^*)$, with the property $\mathfrak{m}(S_T)=\mathfrak{m}(S_F)$ and $S_T\neq S_F$. Let $(\mu_n)_{n\in\mathbb{N}}$ be an increasing sequence of real numbers such that $\mu_n<\mathfrak{m}(S)$ $\forall n$ and $\mu_n\to\mathfrak{m}(S)$ as $n\to\infty$. Since $S_T\geqslant \mu_n\mathbbm{1}$, Theorem \ref{thm:boundsSTboundT} implies
 \[
  \langle v,T v\rangle\;\geqslant\;\mu_n\|v\|^2+\:\mu_n^2\,\big\|(S_F-\mu_n\mathbbm{1})^{-\frac{1}{2}} v\big\|^2
 \]
 for every $v\in\mathcal{D}(T)$, whence
 \[
  \limsup_{n\to\infty}\big\|(S_F-\mu_n\mathbbm{1})^{-\frac{1}{2}} v\big\|^2\;<\;+\infty\,.
 \]

 In fact, for each $v$ the sequence of square norms $\|(S_F-\mu_n\mathbbm{1})^{-\frac{1}{2}} v\|^2$ is monotone increasing. For, if $\mathfrak{m}(S)>\mu'>\mu$, then
 \[
  \begin{split}
   \big\|(S_F-\mu'\mathbbm{1})^{-\frac{1}{2}} v\big\|^2&-\big\|(S_F-\mu\mathbbm{1})^{-\frac{1}{2}} v\big\|^2 \\
   &=\;\int_{[\mathfrak{m}(S),+\infty)}\Big(\frac{1}{\lambda-\mu'}-\frac{1}{\lambda-\mu}\Big)\,\ud\nu_{v}(\lambda)\;\geqslant\;0\,,
  \end{split}
 \]
 where $\nu_v$ is the scalar spectral measure of the self-adjoint operator $S_F$ relative to the vector $v$. Therefore,
 \[
  \exists\lim_{n\to\infty}\big\|(S_F-\mu_n\mathbbm{1})^{-\frac{1}{2}} v\big\|^2\;<\;+\infty\,.
 \]

 As the latter conclusion is tantamount as
 \[
  \exists\lim_{n\to\infty}\int_{[\mathfrak{m}(S),+\infty)}\frac{1}{\lambda-\mu_n}\,\ud\nu_{v}(\lambda)\;<\;+\infty\,,
 \]
 then by monotone convergence the function $\lambda\mapsto(\lambda-\mathfrak{m}(S))^{-1}$ is $\nu_v$-summable. Thus, $\|(S_F-\mathfrak{m}(S)\mathbbm{1})^{-\frac{1}{2}} v\|^2<+\infty$, whence $v\in\mathrm{ran}(S_F-\mathfrak{m}(S)\mathbbm{1})^{\frac{1}{2}}$.

 On the other hand, since by assumption $S_T$ is a self-adjoint extension of $S$ distinct from $S_F$, then by definition of extension parameter $T$ one has that $\mathcal{D}(T)$ is a non-trivial subspace of $\ker S^*$. Summarising,
 \[
  \mathcal{D}(T)\;\subset\;\mathrm{ran}(S_F-\mathfrak{m}(S)\mathbbm{1})^{\frac{1}{2}}\cap\ker S^*
 \]
 and therefore $\mathrm{ran}(S_F-\mathfrak{m}(S)\mathbbm{1})^{\frac{1}{2}}\cap\ker S^*$ is non-trivial. 
\end{proof}

\begin{corollary}\label{cor:cor_of_nec_statement}
 As a consequence of the necessity statement of Theorem \ref{thm:main1}, for each self-adjoint extension $S_T$ of $S$ with $\mathfrak{m}(S_T)=\mathfrak{m}(S)$ one has
 \begin{equation}\label{eq:coreq}
 \begin{split}
  \mathcal{D}(T)\;&\subset\;\mathrm{ran}(S_F-\mathfrak{m}(S)\mathbbm{1})^{\frac{1}{2}}\cap\ker S^*\qquad\textrm{and} \\
  \langle v,Tv\rangle \;&\geqslant\; \mathfrak{m}(S)\|v\|^2+\mathfrak{m}(S)^2\big\| (S_F-\mathfrak{m}(S)\mathbbm{1})^{-\frac{1}{2}}v\big\|^2\qquad\forall v\in\mathcal{D}(T)\,.
 \end{split}
 \end{equation}
\end{corollary}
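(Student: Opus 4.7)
The corollary is really a bookkeeping consequence of what was already shown in the proof of the necessity part of Theorem \ref{thm:main1}; my plan is to extract the two conclusions directly from that argument and pass one inequality to the limit.

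For the first inclusion, I would simply quote the final conclusion of the necessity proof verbatim: given a self-adjoint extension $S_T$ with $\mathfrak{m}(S_T)=\mathfrak{m}(S)$ and an arbitrary $v\in\mathcal{D}(T)$, the argument there shows that the monotone sequence $\|(S_F-\mu_n\mathbbm{1})^{-1/2}v\|^2$ stays bounded as $\mu_n\uparrow\mathfrak{m}(S)$, hence by monotone convergence $(\lambda-\mathfrak{m}(S))^{-1}$ is $\nu_v$-summable and $v\in\mathrm{ran}(S_F-\mathfrak{m}(S)\mathbbm{1})^{1/2}$. Since $\mathcal{D}(T)\subset\ker S^*$ by construction of the KVB parametrisation, this gives the claimed inclusion $\mathcal{D}(T)\subset\mathrm{ran}(S_F-\mathfrak{m}(S)\mathbbm{1})^{1/2}\cap\ker S^*$.

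For the quadratic form inequality, the plan is to start again from the key estimate obtained in the necessity proof via Theorem \ref{thm:boundsSTboundT}, namely
\[
\langle v,Tv\rangle\;\geqslant\;\mu_n\|v\|^2+\mu_n^2\,\big\|(S_F-\mu_n\mathbbm{1})^{-\tfrac{1}{2}}v\big\|^2\qquad\forall n\in\mathbb{N},
\]
valid for any $\mu_n<\mathfrak{m}(S)$. Having already established (in the previous step) that $v$ belongs to $\mathrm{ran}(S_F-\mathfrak{m}(S)\mathbbm{1})^{1/2}$, the right-hand side is well defined also at $\mu=\mathfrak{m}(S)$, and monotone convergence applied to the spectral integral
\[
\big\|(S_F-\mu_n\mathbbm{1})^{-\tfrac{1}{2}}v\big\|^2\;=\;\int_{[\mathfrak{m}(S),+\infty)}\frac{\ud\nu_v(\lambda)}{\lambda-\mu_n}
\]
yields the limit $\|(S_F-\mathfrak{m}(S)\mathbbm{1})^{-1/2}v\|^2$. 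Taking $n\to\infty$ in the displayed inequality then produces the desired bound
\[
\langle v,Tv\rangle\;\geqslant\;\mathfrak{m}(S)\|v\|^2+\mathfrak{m}(S)^2\,\big\|(S_F-\mathfrak{m}(S)\mathbbm{1})^{-\tfrac{1}{2}}v\big\|^2\,.
\]

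There is no real obstacle here: all the analytic work (monotonicity and finiteness of the spectral integrals, applicability of Theorem \ref{thm:boundsSTboundT}) was already done inside the necessity proof. The only point that deserves a short comment is the justification that $\mu_n^2\|(S_F-\mu_n\mathbbm{1})^{-1/2}v\|^2$ converges, not merely lies below its upper bound, which is immediate from the monotone convergence argument above combined with the continuity $\mu_n^2\to\mathfrak{m}(S)^2$.
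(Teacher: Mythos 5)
Your proposal is correct and follows essentially the same route as the paper: extract the inclusion $\mathcal{D}(T)\subset\mathrm{ran}(S_F-\mathfrak{m}(S)\mathbbm{1})^{1/2}\cap\ker S^*$ verbatim from the necessity argument, then pass the inequality $\langle v,Tv\rangle\geqslant\mu_n\|v\|^2+\mu_n^2\|(S_F-\mu_n\mathbbm{1})^{-1/2}v\|^2$ to the limit $\mu_n\uparrow\mathfrak{m}(S)$. The only (immaterial) difference is that you justify the convergence of the spectral integrals by monotone convergence applied directly to $\int(\lambda-\mu_n)^{-1}\,\ud\nu_v(\lambda)$, whereas the paper applies dominated convergence to the difference of the two integrals; both are valid given the summability of $(\lambda-\mathfrak{m}(S))^{-1}$ established in the necessity proof.
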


\begin{proof}
 The inclusion for $\mathcal{D}(T)$ was already proved. Next, as a follow-up of the reasoning of the previous proof, let us observe that for each $v\in\mathcal{D}(T)$ one has 
 \[
  \lim_{n\to\infty}\big\|(S_F-\mu_n\mathbbm{1})^{-\frac{1}{2}} v\big\|^2\;=\;\big\|(S_F-\mathfrak{m}(S)\mathbbm{1})^{-\frac{1}{2}} v\big\|^2\,.
 \]
 Indeed,
  \[
  \begin{split}
   \big\|(S_F-&\mathfrak{m}(S)\mathbbm{1})^{-\frac{1}{2}} v\big\|^2-\big\|(S_F-\mu_n\mathbbm{1})^{-\frac{1}{2}} v\big\|^2 \\
   &=\;\int_{[\mathfrak{m}(S),+\infty)}\Big(\frac{1}{\lambda-\mathfrak{m}(S)}-\frac{1}{\lambda-\mu_n}\Big)\,\ud\nu_{v}(\lambda)\;\xrightarrow[]{n\to\infty}\;0
  \end{split}
 \]
 by dominated convergence. Therefore, one can take the limit $n\to\infty$ in the inequality
  \[
  \langle v,T v\rangle\;\geqslant\;\mu_n\|v\|^2+\:\mu_n^2\,\big\|(S_F-\mu_n\mathbbm{1})^{-\frac{1}{2}} v\big\|^2
 \]
 thus obtaining the second line of \eqref{eq:coreq}. 
\end{proof}

\begin{proof}[Proof of Theorem \ref{thm:main2} and of Theorem \ref{thm:main1}, sufficiency part]~

(i) The fact that \eqref{eq:defq} defines a symmetric quadratic form with strictly positive lower bound is obvious. As for $q$ being closed, let us show that if $(v_n)_{n\in\mathbb{N}}$ is a sequence in $\mathcal{D}[q]$ with $v_n\to v$ and $q[v_n-v_m]\to 0$ as $n,m\to\infty$, then $v\in\mathcal{D}[q]$ and $q[v_n-v]\to 0$. This is indeed equivalent to saying that $q$ is closed (see, e.g., \cite[Prop.~10.1]{schmu_unbdd_sa}). Now, the above assumption on $(v_n)_{n\in\mathbb{N}}$ implies
\[
 \begin{split}
  v_n\;&\to\; v \\
  (S_F-\mathfrak{m}(S)\mathbbm{1})^{-\frac{1}{2}}v_n&\to\; u
 \end{split}
\]
for some $v\in\cH$. As $(S_F-\mathfrak{m}(S)\mathbbm{1})^{-\frac{1}{2}}$ is self-adjoint and hence closed, this implies
\[
 \begin{split}
  v\;&\in\;\mathcal{D}( (S_F-\mathfrak{m}(S)\mathbbm{1})^{-\frac{1}{2}})\;=\;\mathrm{ran}( (S_F-\mathfrak{m}(S)\mathbbm{1})^{\frac{1}{2}}) \\
  u\;&=\;(S_F-\mathfrak{m}(S)\mathbbm{1})^{-\frac{1}{2}}v\,.
 \end{split}
\]
A first conclusion, since $\ker S^*$ is closed in $\cH$ and hence $v\in\ker S^*$ as well, is that $v\in \mathrm{ran}( (S_F-\mathfrak{m}(S)\mathbbm{1})^{\frac{1}{2}})\cap\ker S^*=\mathcal{D}[q]$. As further conclusion, since $v_n\to v$ and  $(S_F-\mathfrak{m}(S)\mathbbm{1})^{-\frac{1}{2}}v_n\to (S_F-\mathfrak{m}(S)\mathbbm{1})^{-\frac{1}{2}}v$ in $\cH$, one has $q[v_n-v]\to 0$. Part (i) of Theorem \ref{thm:main2} is thus proved.

(ii) As $q$ is densely defined in the Hilbert subspace $\overline{\mathcal{D}[q]}$, and it is symmetric, closed, and semi-bounded from below, then $q$ uniquely identifies a self-adjoint operator $T_q$ on $\overline{\mathcal{D}[q]}$ defined by
\[
 \begin{split}
 \mathcal{D}(T_q)\;&:=\;\big\{ v\in \mathcal{D}[q]\,|\,\exists z_v\in\cH\textrm{ with } \langle u,z_v\rangle=q[u,v]\;\forall u\in\mathcal{D}[q]\big\} \\
 T_qv\;&:=\;z_v
 \end{split}
\]
(see, e.g., \cite[Theorem 10.7]{schmu_unbdd_sa}). Since $\mathcal{D}[q]\subset\ker S^*$ and $\ker S^*$ is closed in $\cH$, then $\overline{\mathcal{D}[q]}\subset\ker S^*$, thus proving that $T_q\in\mathcal{S}(\ker S^*)$. This establishes part (ii) of Theorem \ref{thm:main2}.


(iii) Let $T\in\mathcal{S}(\ker S^*)$ with $T\geqslant T_q$. This means that $\mathcal{D}(T)\subset\mathcal{D}(T_q)$ and 
\[
  \langle v,Tv\rangle\;\geqslant\;\langle v,T_q v\rangle \;=\;q[v]\;=\;\mathfrak{m}(S)\|v\|^2+\mathfrak{m}(S)^2\big\| (S_F-\mathfrak{m}(S)\mathbbm{1})^{-\frac{1}{2}}v\big\|^2
\]
for every $v\in\mathcal{D}(T)$. Consider now an arbitrary $\mu<\mathfrak{m}(S)$. With the very same argument used in the proof of the necessity part of Theorem \ref{thm:main2} one sees that 
\[
 \big\| (S_F-\mathfrak{m}(S)\mathbbm{1})^{-\frac{1}{2}}v\big\|^2\;>\;\big\| (S_F-\mu\mathbbm{1})^{-\frac{1}{2}}v\big\|^2\,,
\]
whence 
\[
  \langle v,Tv\rangle\;>\;\mu\|v\|^2+\mu^2\big\| (S_F-\mu\mathbbm{1})^{-\frac{1}{2}}v\big\|^2
\]
for all $v\in\mathcal{D}(T)$. Owing to Theorem \ref{thm:boundsSTboundT}, the self-adjoint extension $S_T$ of $S$ parametrised by the considered $T$ is such that $S_T\geqslant\mu\mathbbm{1}$. By the arbitrariness of $\mu$, one concludes that $S_T\geqslant\mathfrak{m}(S)\mathbbm{1}$, whence $\mathfrak{m}(S_T)=\mathfrak{m}(S)$. Unless ``$T=\infty$'' (in the sense $\mathcal{D}[T]=\{0\}$), all other choices for $T$ identifies non-Friedrichs extensions. 
This completes the proof of part (iii) of Theorem  \ref{thm:main2}. At the same time, this proves that assumption \eqref{eq:maincond} in Theorem \ref{thm:main1} allows one to construct non-Friedrichs extensions with the same Friedrichs lower bound. Thus also the sufficiency statement of Theorem \ref{thm:main1} is established.

(iv) Last, let $S_T$ be a self-adjoint extension of $S$ with $\mathfrak{m}(S_T)=\mathfrak{m}(S)$. 
The necessity statement of Theorem \ref{thm:main1} implies that the intersection $\mathrm{ran}(S_F-\mathfrak{m}(S)\mathbbm{1})^{\frac{1}{2}}\cap\ker S^*$ is non-trivial, so one can define the form $q$ and the operator $T_q\in\mathcal{S}(\ker S^*)$ as in parts (i) and (ii) of Theorem \ref{thm:main2}. Owing to Corollary \ref{cor:cor_of_nec_statement},
\[
\begin{split}
 \mathcal{D}[T]\;&\subset\;\mathcal{D}(q) \\
 T[v]\;&=\;q[v]\qquad\forall v\in\mathcal{D}[T]\,.
\end{split}
\]
This means precisely that $T\geqslant T_q$.
\end{proof}

\section{Applications}\label{sec:applications}

Let us discuss now a few instructive examples of application of Theorems \ref{thm:main1}-\ref{thm:main2}.

\subsection{Schr\"{o}dinger quantum particle on an interval}\label{sec:example1}~

Let us revisit in more systematic terms the example presented in Sect.~\ref{sec:motivation}. The operator $S$ has deficiency index equal to 2, and explicitly
\begin{equation}
 \ker S^*\;=\;\mathrm{span}\{\mathbf{1},x\}\,.
\end{equation}

The operator $S_F-\pi^2\mathbbm{1}$ fails to be invertible on the whole $\cH=L^2(0,1)$ because it has a non-trivial kernel:
\begin{equation}\label{eq:kerranSF01}
 \begin{split}
  \ker (S_F-\pi^2\mathbbm{1})\;&=\;\mathrm{span}\{\sin\pi x\} \\
  \mathrm{ran}(S_F-\pi^2\mathbbm{1})\;&=\;\mathrm{span}\{\sin n\pi x\,|\,n\in\mathbb{N},n\geqslant 2\}\,.
 \end{split}
\end{equation}
As $(S_F-\pi^2\mathbbm{1})$ is diagonalised as above over an orthonormal basis of eigenfunctions, its  powers 
$(S_F-\pi^2\mathbbm{1})^{\delta}$ and $(S_F-\pi^2\mathbbm{1})^{-\delta}$, with $\delta>0$, are qualified by their action on the same basis of eigenfunctions, with eigenvalues given by the corresponding powers of the eigenvalues of $(S_F-\pi^2\mathbbm{1})$; the negative powers are clearly only defined on the Hilbert subspace $\overline{\mathrm{ran}(S_F-\pi^2\mathbbm{1})}=\{\sin\pi x\}^\perp$. Therefore,
\begin{equation}\label{eq:kerranSF01-bis}
 \mathrm{ran}(S_F-\pi^2\mathbbm{1})^{\frac{1}{2}}\;=\;\mathrm{ran}(S_F-\pi^2\mathbbm{1})\;=\;\mathrm{span}\{\sin n\pi x\,|\,n\in\mathbb{N},n\geqslant 2\}\,.
\end{equation}

\begin{lemma}\label{lem:VandactionSF}
One has
\begin{equation}\label{eq:EX1V}
 V\;:=\;\mathrm{ran}(S_F-\pi^2\mathbbm{1})^{\frac{1}{2}}\cap \ker S^*\;=\;\mathrm{span}\{\mathbf{1}-2x\}
\end{equation}
and 
\begin{equation}\label{eq:EX1SF} 
 (S_F-\pi^2\mathbbm{1})^{-1}(\mathbf{1}-2x)\;=\;\pi^{-2}\big(\cos\pi x-\mathbf{1}+2x\big)\,.
\end{equation}
\end{lemma}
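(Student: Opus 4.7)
The plan for \eqref{eq:EX1V} is a one-line linear-algebra computation in the finite-dimensional space $\ker S^*$. From $S^*=-\ud^2/\ud x^2$ on $H^2(0,1)$ one has $\ker S^*=\mathrm{span}\{\mathbf{1},x\}$, and by \eqref{eq:kerranSF01-bis} the space $\mathrm{ran}(S_F-\pi^2\mathbbm{1})^{1/2}$ coincides with the $L^2$-orthogonal complement of $\sin\pi x$. Thus $V$ is the subspace of the two-dimensional $\mathrm{span}\{\mathbf{1},x\}$ annihilated by the single linear functional $\phi\mapsto\langle\sin\pi x,\phi\rangle$. The two elementary integrals $\int_0^1\sin\pi x\,\ud x=2/\pi$ and $\int_0^1 x\sin\pi x\,\ud x=1/\pi$ turn the condition $\langle\sin\pi x,a\mathbf{1}+bx\rangle=0$ into $2a+b=0$, which upon normalisation gives $V=\mathrm{span}\{\mathbf{1}-2x\}$.

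For \eqref{eq:EX1SF} I would exploit that $\pi^2\in\sigma_p(S_F)$, so that $(S_F-\pi^2\mathbbm{1})^{-1}$ is defined through the functional calculus on $\{\sin\pi x\}^\perp$ and returns the unique preimage orthogonal to $\ker(S_F-\pi^2\mathbbm{1})=\mathrm{span}\{\sin\pi x\}$. The plan is to first solve the boundary value problem $-u''-\pi^2 u=\mathbf{1}-2x$ with $u(0)=u(1)=0$ (the Dirichlet conditions inherited from $\mathcal{D}(S_F)=H^2(0,1)\cap H^1_0(0,1)$), and then to select from the family of solutions the one orthogonal to $\sin\pi x$. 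A polynomial ansatz yields the particular solution $u_p=\pi^{-2}(2x-1)$; the general solution is thus $u=\pi^{-2}(2x-1)+c_1\sin\pi x+c_2\cos\pi x$. The boundary condition $u(0)=0$ fixes $c_2=\pi^{-2}$, whereas $u(1)=0$ is automatically satisfied (this is the Fredholm-alternative statement that $\mathbf{1}-2x\in\mathrm{ran}(S_F-\pi^2\mathbbm{1})$, already secured by part (i)). The remaining parameter $c_1$ is forced to vanish by imposing $\langle\sin\pi x,u\rangle=0$: together with $\int_0^1\sin\pi x\cos\pi x\,\ud x=0$ and the two integrals above, this reduces to $c_1\|\sin\pi x\|^2=0$. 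The resulting $u=\pi^{-2}(\cos\pi x-\mathbf{1}+2x)$ is exactly the right-hand side of \eqref{eq:EX1SF}.

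No step presents a genuine obstacle; the only conceptual point worth emphasising is that $(S_F-\pi^2\mathbbm{1})^{-1}$ must be read as the pseudo-inverse on $\overline{\mathrm{ran}(S_F-\pi^2\mathbbm{1})}$, equivalently the minimal $L^2$-norm solution of the underdetermined Dirichlet problem, so that \eqref{eq:EX1SF} is well-posed despite the nontrivial kernel of $S_F-\pi^2\mathbbm{1}$.
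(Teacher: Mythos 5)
Your proof is correct and follows essentially the same route as the paper: the same orthogonality condition $2a+b=0$ against $\sin\pi x$ for \eqref{eq:EX1V}, and the same Dirichlet boundary value problem for \eqref{eq:EX1SF}, with the preimage selected by orthogonality to $\ker(S_F-\pi^2\mathbbm{1})$ --- which is exactly the paper's ``minimal norm solution'' criterion, and which you in fact justify slightly more explicitly by verifying $\langle\sin\pi x,\,2x-\mathbf{1}\rangle=0$. No gaps.
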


\begin{proof}
 In order for a generic element $a\mathbf{1}+bx\in\ker S^*$, with $a,b\in\mathbb{C}$, to belong to $\mathrm{ran}(S_F-\pi^2\mathbbm{1})^{\frac{1}{2}}$, owing to \eqref{eq:kerranSF01}-\eqref{eq:kerranSF01-bis} it must be
 \[
  0\;=\;\int_0^1(a+bx)\,\sin\pi x\,\ud x\;=\;\pi^{-1}(2a+b)\,,
 \]
 whence $b=-2a$. Thus $g\in V$ implies $g=a(\mathbf{1}-2x)$ for some $a\in\mathbb{C}$. Next, one has to check that $\mathbf{1}-2x\in \mathrm{ran}(S_F-\pi^2\mathbbm{1})^{\frac{1}{2}}$. This is the same as $\mathbf{1}-2x\in \mathrm{ran}(S_F-\pi^2\mathbbm{1})$\,, that is, 
 \[
  \mathbf{1}-2x\;=\;(S_F-\pi^2\mathbbm{1})u\qquad\textrm{for some }u\in\mathcal{D}(S_F)\,.
 \]
 This is equivalent to saying that $u$ is the \emph{minimal norm solution} to the boundary value problem
 \[
  \begin{cases}
   \;-u''-\pi^2 u\;=\;1-2x \\
   \;u(0)=0=u(1)\,.
  \end{cases}
 \]
 By standard ODE methods one finds that the general solution is
 \[
  u_{\mathrm{gen}}(x)\;=\;\pi^{-2}\big(\cos\pi x-1+2x\big)+B\sin\pi x\,,\qquad B\in\mathbb{C}\,,
 \]
 thus the minimal norm solution is the one with $B=0$. This proves that the function $u_\circ:=\pi^{-2}\big(\cos\pi x-\mathbf{1}+2x\big)\in\mathcal{D}(S_F)$ satisfies $(S_F-\pi^2\mathbbm{1})u_\circ=\mathbf{1}-2x$, thus completing the proof of \eqref{eq:EX1V} and \eqref{eq:EX1SF}.
\end{proof}

As the intersection space \eqref{eq:EX1V} is non-trivial, Theorem \ref{thm:main1} ensures that $S$ admits non-Friedrichs self-adjoint extensions with the same Friedrichs lower bound. This is consistent with what discussed in the introduction: $\mathfrak{m}(S_F)=\mathfrak{m}(S_A)$, Friedrichs and anti-periodic extension have the same lower bound.

It is instructive to apply the constructive recipe of Theorem \ref{thm:main2} so as to identify all such extensions. With the notation therein, 
\begin{equation}
 \mathcal{D}[q]\;=\;\mathcal{D}(T_q)\;=\;V\;=\;\mathrm{span}\{\mathbf{1}-2x\}\,,
\end{equation}
thus $T_q$ is an operator of multiplication by some real number $t_q$,
\begin{equation}
 T_q(\mathbf{1}-2x)\;=\;t_q(\mathbf{1}-2x)\,.
\end{equation}
Since
\begin{equation*}
\begin{split}
 \langle (\mathbf{1}-2x), T_q (\mathbf{1}-2x)\rangle\;&=\;\pi^2\|\mathbf{1}-2x\|_2^2+\pi^4\big\| (S_F-\pi^2\mathbbm{1})^{-\frac{1}{2}}(\mathbf{1}-2x)\big\|_2^2 \\
 &=\;\pi^2\|\mathbf{1}-2x\|_2^2+\pi^4\langle (\mathbf{1}-2x),(S_F-\pi^2\mathbbm{1})^{-1}(\mathbf{1}-2x)\rangle \\
 &=\;\pi^2\|\mathbf{1}-2x\|_2^2+\pi^2\langle (\mathbf{1}-2x),\cos x-(\mathbf{1}-2x))\rangle \\
 &=\;\pi^2\langle (\mathbf{1}-2x),\cos x\rangle \\
 &=\;4\;=\;12\,\|\mathbf{1}-2x\|_2^2
 \end{split}
\end{equation*}
(having used \eqref{eq:EX1SF} in the third step and $\|\mathbf{1}-2x\|_2^2=\frac{1}{3}$ in the last step),
then necessarily $t_q=12$.

Theorem \ref{thm:main2}, in parts (iii) and (iv), then states that the self-adjoint extensions $S_T$ of $S$ with $\mathfrak{m}(S_T)=\mathfrak{m}(S_F)$ are those labelled by self-adjoint operators $T$ with $T\geqslant T_q$. Such $T$'s, apart from the one parametrising the Friedrichs extension, are therefore such that
\begin{equation}\label{eq:goodTs}
 \begin{split}
  & \mathcal{D}(T)\;=\;V\;=\;\mathrm{span}\{\mathbf{1}-2x\} \\
  & \textrm{$T$ is the multiplication by some $t\geqslant 12$}\,.
 \end{split}
\end{equation}
Keeping into account, as is immediate to check, that 
\begin{equation}
 W\;:=\;V^\perp\cap\ker S^*\;=\;\mathrm{span}\{\mathbf{1}\}\,,
\end{equation}
the extension $S_T$ for each $T$ satisfying \eqref{eq:goodTs} has domain given by formula \eqref{eq:ST} of Theorem \ref{thm:KVB-parametrisation}, that is,
\begin{equation}\label{eq:STforlaplace01}
\begin{split}
\mathcal{D}(S_T)\;&=\;\left\{f+S_F^{-1}(Tv+w)+v\left|\!\!
\begin{array}{c}
f\in\mathcal{D}(\overline{S})\,, \\
v\in V\,,\;w\in W
\end{array}\!\!
\right.\right\} \\
&=\;\left\{f+S_F^{-1}(t\alpha(\mathbf{1}-2x)+\beta\mathbf{1})+\alpha(\mathbf{1}-2x)\left|\!\!
\begin{array}{c}
f\in H^2_0(0,1) \\
\alpha,\beta\in\mathbb{C}
\end{array}\!\!
\right.\right\} .
\end{split}
\end{equation}
The action of the everywhere defined and bounded operator $S_F^{-1}$ on the subspace $\ker S^*=\mathrm{span}\{\mathbf{1},x\}$ is easily computed by solving a boundary value problem completely analogous to the one considered in the proof of Lemma \ref{lem:VandactionSF}. The result (as found, e.g., in \cite[Eq.~(91)]{GMO-KVB2017}) is
\begin{equation}
 S_F^{-1}(a\mathbf{1}+bx)\;=\;\Big(\frac{a}{2}+\frac{b}{6}\Big)x-\frac{a}{2}x^2-\frac{b}{6}x^3\,.
\end{equation}
Thus,
\begin{equation}\label{eq:finalwithKVBparam}
 \mathcal{D}(S_T)\,=\,\left\{f+\alpha\mathbf{1}+\Big(\frac{t\alpha+3\beta}{6}-2\alpha\Big)x-\frac{t\alpha+\beta}{2}x^2+\frac{t\alpha}{3}x^3\left|\!\!
\begin{array}{c}
f\in H^2_0(0,1) \\
\alpha,\beta\in\mathbb{C}
\end{array}\!\!
\right.\right\}. \!\!\!\!\!\!\!\!\!\!\!\!
\end{equation}

Formula \eqref{eq:finalwithKVBparam}, for each fixed $t\geqslant 12$, identifies those self-adjoint extensions of $S$ different from the Friedrichs extension, but with the same lower bound. In order to identify the boundary condition of self-adjointness satisfied by a generic element $g\in\mathcal{D}(S_T)$ for each extension of type \eqref{eq:finalwithKVBparam}, we compute the boundary values
\begin{equation}\label{eq:bc-1}
 \begin{array}{lcl}
  g(0)\;=\;\alpha & & g'(0)\;=\;\frac{t\alpha}{6}+\frac{\beta}{2}-2\alpha \\
  g(1)\;=\;-\alpha & & g'(1)\;=\;\frac{t\alpha}{6}-\frac{\beta}{2}-2\alpha
 \end{array}
\end{equation}
and re-write
\begin{equation}\label{eq:bc-2}
 \begin{split}
  g(0)+g(1)\;&=\;0 \\
  g'(0)+g'(1)\;&=\;{\textstyle\frac{1}{3}}(t-12)\,g(0)\,.
 \end{split}
\end{equation}
It was indeed convenient to cast \eqref{eq:bc-1} in the form \eqref{eq:bc-2} because the latter can be more easily matched with the general conditions of self-adjointness of the extensions of $S$, as we shall now do.

We refer to the following very standard result, obtained for example by exploiting Theorem \ref{thm:KVB-parametrisation} for all possible extension parameters (see, e.g., \cite[Example 14.10]{schmu_unbdd_sa}), or equivalently by means of the alternative extension scheme a la von Neumann applied to $S$ (see, e.g., \cite[Sect.~6.2.3.1]{GTV-2012}).

\begin{proposition}
 The family of self-adjoint extensions on $L^2(0,1)$ of the operator $S$ defined in \eqref{eq:Slaplace01} consists of restrictions of $S^*$, and hence of operators of the form $-\frac{\ud^2}{\ud x^2}$, to domains of $H^2(0,1)$-functions $g$ satisfying boundary conditions of one of the following four classes:
\begin{equation}\label{bc2}
g'(0)\;=\;b_1 g(0) + c g(1)\,,\qquad g'(1)\;=\;-\overline{c} g(0) - b_2 g(1)\,,
\end{equation}
\begin{equation}\label{bc1a}
g'(0)\;=\;b_1 g(0) + \overline{c} g'(1)\,,\qquad g(1)\;=\; c g(0)\,,
\end{equation}
\begin{equation}\label{bc1b}
g'(1)\;=\;-b_1 g(1)\,,\qquad g(0)\;=\;0\,,
\end{equation}
\begin{equation}\label{bc0}
g(0)\;=\;0\;=\;g(1)\,,
\end{equation}
where $c\in\mathbb{C}$ and $b_1,b_2\in\mathbb{R}$ and qualify each extension.
\end{proposition}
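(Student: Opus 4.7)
The plan is to derive the boundary conditions from the Lagrange (boundary) identity, then classify 2-dimensional Lagrangian (maximally isotropic) subspaces of the $4$-dimensional space of boundary values. Since $\mathcal{D}(S^*)=H^2(0,1)$, every self-adjoint extension $\widetilde{S}$ satisfies $\overline{S}\subset\widetilde{S}\subset S^*$, so $\widetilde{S}$ is the restriction of $S^*=-\frac{\ud^2}{\ud x^2}$ to some domain $D$ with $H^2_0(0,1)\subset D\subset H^2(0,1)$. Two integrations by parts yield the identity
\[
\langle f,S^*g\rangle-\langle S^*f,g\rangle\;=\;\overline{f(0)}g'(0)-\overline{f'(0)}g(0)-\overline{f(1)}g'(1)+\overline{f'(1)}g(1)\qquad\forall f,g\in H^2(0,1),
\]
so self-adjointness of the restriction $S^*\upharpoonright D$ is equivalent to $D$ being a maximal subspace on which the right-hand side vanishes for all $f,g\in D$.

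Next I would introduce the boundary map $\gamma:H^2(0,1)\to\mathbb{C}^4$, $\gamma(f)=(f(0),f'(0),f(1),f'(1))$. This map is surjective with $\ker\gamma=H^2_0(0,1)$, hence admissible domains $D$ are in bijection with subspaces $L:=\gamma(D)\subset\mathbb{C}^4$. On $\mathbb{C}^4$ the Lagrange identity defines the non-degenerate sesquilinear form
\[
\omega\bigl((a,b,c,d),(a',b',c',d')\bigr)\;=\;\bar ab'-\bar ba'-\bar cd'+\bar dc',
\]
and self-adjoint extensions correspond precisely to $2$-dimensional $\omega$-Lagrangian subspaces $L$. (That the dimension is $2$ follows from the deficiency indices $(2,2)$, or equivalently from the signature $(2,2)$ of $\omega$.)

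The core step is then a case analysis on the projection $\pi:L\to\mathbb{C}^2$, $\pi(a,b,c,d)=(a,c)$, whose rank can be $2$, $1$, or $0$:

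\emph{Rank $2$.} Then $\pi$ is an isomorphism, so $b$ and $d$ are linear functions of $a,c$, say $b=b_1 a+c_0 c$ and $d=c_1 a-b_2 c$ with coefficients in $\mathbb{C}$. Imposing $\omega\equiv 0$ on $L\times L$ forces $b_1,b_2\in\mathbb{R}$ and $c_1=-\overline{c_0}$, yielding \eqref{bc2} with $c:=c_0$.

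\emph{Rank $1$.} Either $\pi(L)=\{(a,c):c=ca_0 a\text{ for some fixed }c_0\in\mathbb{C}\}$ with $a$ free, leading after imposing $\omega\equiv 0$ to \eqref{bc1a}; or $\pi(L)=\{(0,c):c\in\mathbb{C}\}$, giving \eqref{bc1b}.

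\emph{Rank $0$.} Then $a=c=0$ on $L$, which immediately gives \eqref{bc0}.

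The main work is the rank-$2$ case, where a short linear-algebraic computation with $\omega$ pins down exactly the three real parameters $b_1,b_2,\re c,\im c$; the other cases are easier but need to be exhibited separately because the structure of the boundary conditions degenerates. Finally, I would remark that counting parameters agrees with the KVB picture: the four families correspond to $T\in\mathcal{S}(\ker S^*)$ with $\dim\mathcal{D}(T)\in\{0,1,1,2\}$ in the parametrisation of Theorem \ref{thm:KVB-parametrisation}, where the two separate rank-$1$ families arise from the two inequivalent types of one-dimensional self-adjoint operators in $\ker S^*\cong\mathbb{C}^2$ (one with a free complex off-diagonal parameter, one without). This gives the stated proposition.
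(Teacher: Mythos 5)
Your argument is correct, but it is not the route the paper takes: the paper does not actually prove this proposition, presenting it as a ``very standard result'' and referring either to the Kre{\u\i}n--Vi\v{s}ik--Birman parametrisation of Theorem \ref{thm:KVB-parametrisation} run over all extension parameters $T\in\mathcal{S}(\ker S^*)$ (Schm\"udgen, Example~14.10) or to von Neumann's scheme (Gitman--Tyutin--Voronov). Your boundary-form argument --- pass to the quotient $\mathcal{D}(S^*)/\mathcal{D}(\overline S)\cong\mathbb{C}^4$ via the trace map $\gamma$, observe that self-adjoint restrictions of $S^*$ correspond to $2$-dimensional $\omega$-isotropic subspaces because $\ii\omega$ has signature $(2,2)$, and classify these by the rank of the projection onto the components $(g(0),g(1))$ --- is a legitimate, self-contained third route, and arguably the most elementary, since it needs neither $S_F^{-1}$ nor the deficiency subspaces; what it does not deliver directly is the dictionary between the boundary conditions and the parameter $T$, which the paper uses immediately afterwards (the remark following the proposition and the identification \eqref{eq:choiceb1c}), whereas the route through Theorem \ref{thm:KVB-parametrisation} produces that dictionary for free. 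Three small points. First, in the rank-$1$ case the condition should read $c=c_0a$ (your ``$c=ca_0a$'' is a typo). Second, the subcase $\pi(L)=\mathrm{span}\{(0,1)\}$ still needs a line of argument: isotropy of $\omega$ restricted to the $(g(1),g'(1))$ components forces $g'(1)$ to be a \emph{real} multiple of $g(1)$ and forces $(0,1,0,0)\in L$, and only then does \eqref{bc1b} drop out. Third, your closing parenthetical is inaccurate: every self-adjoint operator on a one-dimensional subspace of $\mathbb{C}^2$ is multiplication by a real scalar, so the dichotomy between \eqref{bc1a} and \eqref{bc1b} reflects which one-dimensional subspace of $\ker S^*$ is chosen as $\mathcal{D}(T)$, not two inequivalent ``types'' of operator; this aside is harmless but should be corrected or dropped.
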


\begin{remark}
With reference to the general formula \eqref{eq:ST}, extensions of type \eqref{bc2} correspond to the case in which $\dim\mathcal{D}(T)=2$, extensions of type \eqref{bc1a} or \eqref{bc1b} correspond to $\dim\mathcal{D}(T)=1$, and the extension of type \eqref{bc0} is the Friedrichs extension, $\dim\mathcal{D}(T)=0$. 
\end{remark}

By direct comparison between \eqref{eq:bc-2} and \eqref{bc2}-\eqref{bc0} we see that \eqref{eq:bc-2} can only be of type \eqref{bc1a} with
\begin{equation}\label{eq:choiceb1c}
 b_1\;=\;{\textstyle\frac{1}{3}}(t-12)\qquad\textrm{ and }\qquad c\;=\;-1\,.
\end{equation}
We have thus proved the following.

\begin{proposition}\label{prop:ExampleDD}
 The non-Friedrichs self-adjoint extensions on $L^2(0,1)$ of the operator $S$ defined in \eqref{eq:Slaplace01} which preserve the Friedrichs lower bound $\mathfrak{m}(S)=\pi^2$ are all those operators acting as $-\frac{\ud^2}{\ud x^2}$ on $H^2(0,1)$-functions $g$ with boundary condition
 \begin{equation}\label{eq:bc-3}
 \begin{split}
  g(0)+g(1)\;&=\;0 \\
  g'(0)+g'(1)\;&=\;b\,g(0)
 \end{split}
\end{equation}
 for fixed $b\geqslant 0$. Each $b$ qualifies one of such extensions, with a one-to-one correspondence. Such extensions are ordered with increasing $b$. The choice $b=0$ corresponds to anti-periodic boundary conditions.
\end{proposition}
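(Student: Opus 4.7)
The plan is to exploit the fact that most of the work has already been carried out in the paragraphs preceding the statement: the explicit parametrisation \eqref{eq:finalwithKVBparam} of $\mathcal{D}(S_T)$ for $T$ as in \eqref{eq:goodTs}, the computation \eqref{eq:bc-1} of the boundary values, their algebraic reformulation \eqref{eq:bc-2}, and the matching with class \eqref{bc1a} via \eqref{eq:choiceb1c}. The task of the proof is then to string these steps together and to address three residual points: first, that type \eqref{bc1a} is the only class among \eqref{bc2}--\eqref{bc0} compatible with \eqref{eq:bc-2}; second, the claimed bijection between such extensions and the scalar $b\geqslant 0$ together with their ordering; third, the identification of $b=0$ with the anti-periodic boundary condition.

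For the first point a short case analysis suffices: both \eqref{bc0} and \eqref{bc1b} impose $g(0)=0$, which is incompatible with \eqref{eq:bc-2} since there $g(0)$ is free; \eqref{bc2} places two independent linear conditions on the pair $(g(0),g(1))$, whereas \eqref{eq:bc-2} imposes only the single trace condition $g(0)+g(1)=0$. Only \eqref{bc1a} remains, and a direct comparison fixes $c=-1$ and $b_1=\frac{1}{3}(t-12)$. Introducing the change of parameter $b:=\frac{1}{3}(t-12)$ then converts the range $t\in[12,+\infty)$ into $b\in[0,+\infty)$ and turns \eqref{eq:bc-2} into \eqref{eq:bc-3}.

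For the bijection and the ordering, I would invoke Theorem \ref{thm:KVB-parametrisation}: since $V=\mathrm{span}\{\mathbf{1}-2x\}$ is one-dimensional, every admissible $T\in\mathcal{S}(\ker S^*)$ with $T\geqslant T_q$ and $\mathcal{D}(T)=V$ is uniquely determined by a single real number $t\geqslant 12$, hence by $b\geqslant 0$; the equivalence $b_1\leqslant b_2\;\Leftrightarrow\;S_{T_1}\leqslant S_{T_2}$ follows from Theorem \ref{thm:KVB-parametrisation}(ii) (or equivalently Theorem \ref{thm:main3}). Finally, substituting $b=0$ in \eqref{eq:bc-3} gives $g(1)=-g(0)$ and $g'(1)=-g'(0)$, which is exactly the anti-periodic domain $\mathcal{D}(S_A)$ recalled in Section \ref{sec:motivation}. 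The only step requiring a little care is the case analysis ruling out \eqref{bc2}, which is nevertheless purely a matter of counting the number of independent trace constraints in \eqref{eq:bc-2} against those allowed by \eqref{bc2}; no genuine obstacle is expected beyond that.
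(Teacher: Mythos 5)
Your proposal is correct and follows essentially the same route as the paper, which obtains \eqref{eq:finalwithKVBparam} from Theorems \ref{thm:main1}--\ref{thm:main2} (with $t_q=12$), computes the traces \eqref{eq:bc-1}, recasts them as \eqref{eq:bc-2}, and matches these with class \eqref{bc1a} via \eqref{eq:choiceb1c}, the bijection, ordering, and the $b=0$ case being read off exactly as you describe. One small correction to your case analysis: \eqref{bc2} imposes \emph{no} constraint on the pair $(g(0),g(1))$ --- it merely prescribes $g'(0),g'(1)$ in terms of them, leaving the traces free --- so the correct reason for excluding it is that \eqref{eq:bc-2} \emph{does} constrain $(g(0),g(1))$ to the line $g(0)+g(1)=0$; the conclusion is unaffected.
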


The application of Theorems \ref{thm:main1} and \ref{thm:main2} thus allowed for a fast identification of all non-Friedrichs extensions with Friedrichs lower bound of the minimally defined Laplacian on $[0,1]$, which would have otherwise required a tedious computation, by means of \eqref{bc2}-\eqref{bc0}, of all the discrete spectra of the various extensions, in order to select those with bottom equal to $\pi^2$. 

For completeness, here is how the direct check would have proceeded. Let us limit the analysis to the eigenvalue problem for a generic self-adjoint extension of type \eqref{bc1a} with the choice \eqref{eq:choiceb1c}, namely
\begin{equation}\label{eq:ODEproblemDD}
 \begin{cases}
 \; -g''\;=\;\lambda g \\
 \;  g(0)+g(1)\;=\;0 \\
 \;  g'(0)+g'(1)\;=\;{\textstyle\frac{1}{3}}(t-12)\,g(0) \\
 \; (\lambda\in\mathbb{R}\,,\;g\in H^2(0,1))
 \end{cases}
\end{equation}
for fixed $t\in\mathbb{R}$. $g$ must be of the form $g(x)=A\cos\sqrt{\lambda}x+B\sin\sqrt{\lambda}x$, $A,B\in\mathbb{C}$, and for sure the pairs $(g,\lambda)$ with
\begin{equation}\label{eq:firstpartofspectrum}
 g(x)\;=\;\sin((2n+1) \pi x)\,,\qquad \lambda =(2n+1)^2 \pi^2\,,\qquad n \in \mathbb{N}_0 
\end{equation}
solve \eqref{eq:ODEproblemDD}, showing that all such extensions have the eigenvalues $(2n+1)^2 \pi^2$, $n \in \mathbb{N}_0$, in common. The remaining (i.e., non-$\sin$-only) solutions to \eqref{eq:ODEproblemDD} are obtained imposing $B\neq 0$, and it is then simple to conclude that the admissible $\lambda$'s are the ($t$-dependent) roots of
\begin{equation}\label{eq:rooteq}
 F(\lambda)\;=\;t\,,\qquad\textrm{where}\qquad F(\lambda) \;:=\; 12 - 6\sqrt{\lambda}\,\textstyle \frac{\,1+\cos\sqrt{\lambda}}{\sin\sqrt\lambda}
\end{equation}
(and understanding the above trigonometric functions as hyperbolic functions when $\lambda<0$).
As $F(\lambda)$ increases with $\lambda$ in all intervals in which it is defined, and $F(\pi^2)=12$, one deduces that only for $t\geqslant 12$ the admissible $\lambda$'s selected by \eqref{eq:rooteq} satisfy $\lambda\geqslant \pi^2$ (see Figure \ref{fig:evs}). The spectrum thus determined from \eqref{eq:firstpartofspectrum} and \eqref{eq:rooteq} indeed confirms, by direct inspection, what found in Prop.~\ref{prop:ExampleDD} by means of our Theorem \ref{thm:main2}.

\begin{figure}[h!]
\includegraphics[width=0.45\textwidth]{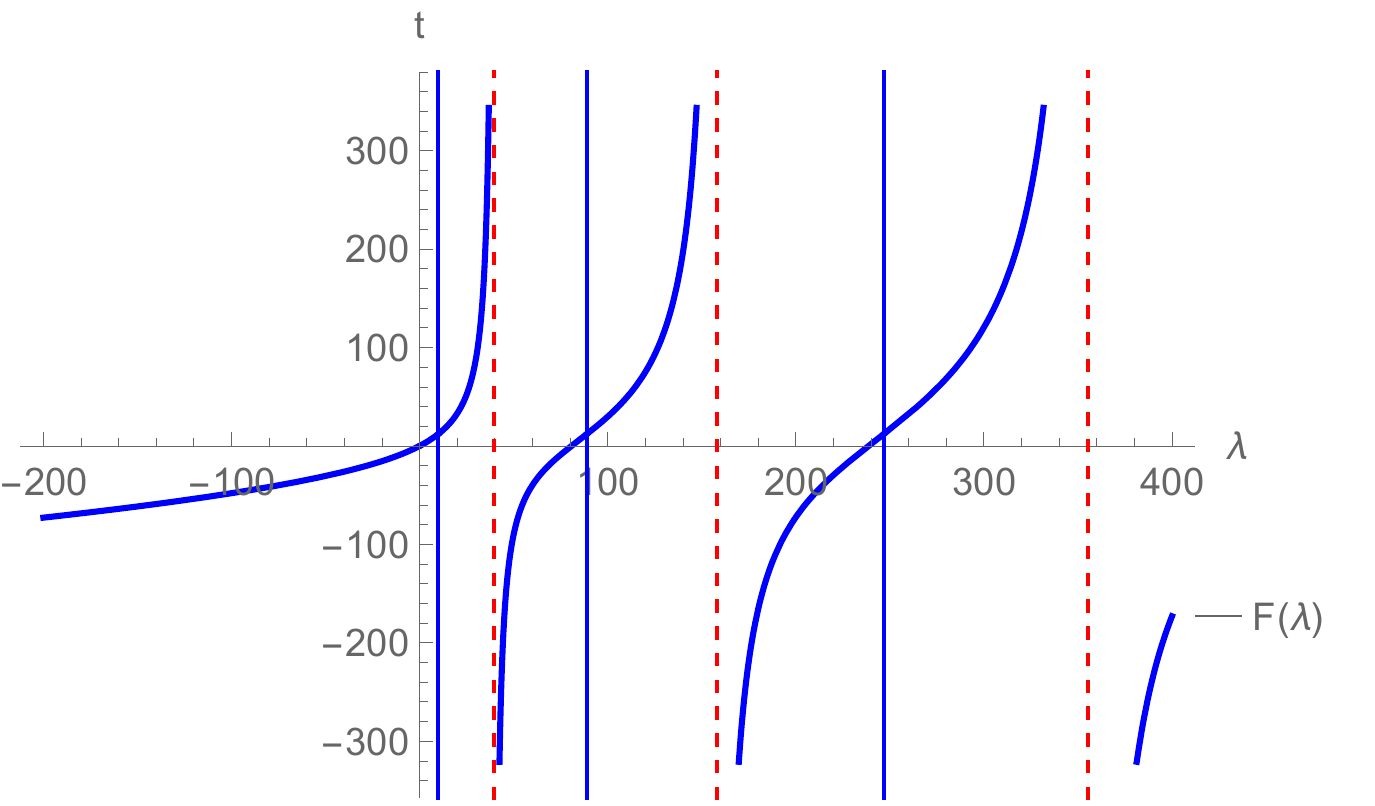} \quad \includegraphics[width=0.45\textwidth]{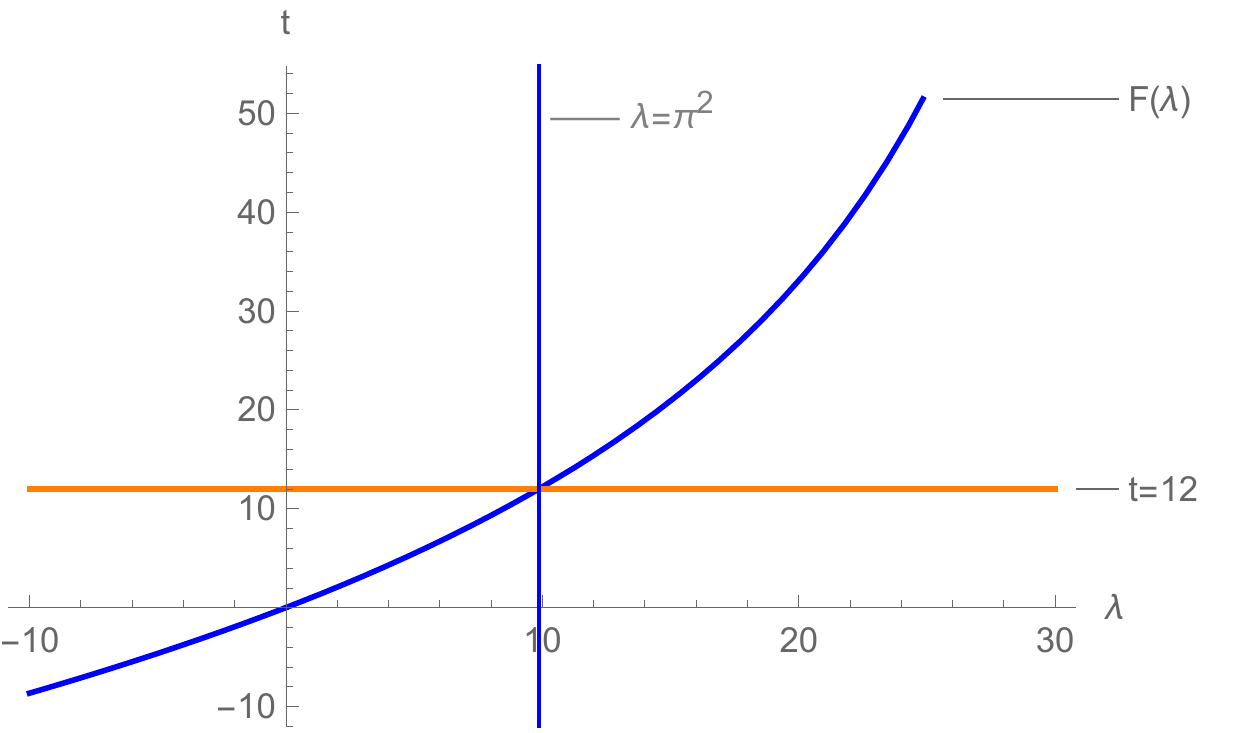} \label{fig:evs}

\caption{Left: plot of $F(\lambda)$ defined in \eqref{eq:rooteq} (blue curves) as compared to the eigenvalues of type \eqref{eq:firstpartofspectrum} (vertical blue lines) and the Friedrichs eigenvalues (dashed red lines). Right: magnification of the first positive interval of definition of $F(\lambda)$. Eigenvalues determined by \eqref{eq:firstpartofspectrum} and \eqref{eq:rooteq} correspond to the intersections of the blue curves with the horizontal lines at level $t$.}
\end{figure}

\subsection{Schr\"{o}dinger quantum particle in $\mathbb{R}^3$ with point interaction}\label{sec:example2}~

This is an example with deficiency index equal to 1. With respect to the Hilbert space $\cH=L^2(\mathbb{R}^3)$ we consider the operator
\begin{equation}
 \mathcal{D}(\widetilde{S})\;=\;C^\infty_0(\mathbb{R}^3\setminus\{0\})\,,\qquad \widetilde{S}\;=\;-\Delta\,.
\end{equation}
$\widetilde{S}$ is densely defined and symmetric, with $\mathfrak{m}(\widetilde{S})=0$.

The self-adjoint extensions of $\widetilde{S}$ are Hamiltonians for a quantum particle in three dimensions subject to a point interaction supported at $x=0$, and they are very well studied and understood.

\begin{theorem}\label{thm:Deltaalpha}\emph{[See, e.g., \cite[Chapt.~I.1]{albeverio-solvable}.]}
\begin{itemize}
 \item[(i)] $\widetilde{S}$ has unit deficiency index. The Friedrichs extension of $\widetilde{S}$ is the self-adjoint (negative) Laplacian on $L^2(0,1)$ with domain $H^2(0,1)$. All other self-adjoint extensions of $\widetilde{S}$ form the family $\{-\Delta_{\alpha}\,|\,\alpha\in\mathbb{R}\}$, where
 \begin{equation}
  \begin{split}
   \mathcal{D}(-\Delta_\alpha)\;&=\;\left\{ g=\phi+\frac{\phi(0)}{\alpha+\frac{1}{4\pi}} G_1 \,\Big|\,\phi\in H^2(\mathbb{R}^3)\right\} \\
   (-\Delta_\alpha+\mathbbm{1})g\;&=\;(-\Delta+\mathbbm{1})\phi
  \end{split}
 \end{equation}
 and
 \begin{equation}
  G_1\;:=\;(2\pi)^{\frac{3}{2}}\frac{\,e^{-|x|}}{\,4\pi|x|\,}\,.
 \end{equation}
 \item[(ii)] For each $\alpha\in\mathbb{R}$,
\begin{equation}
\sigma_{\mathrm{ess}}(-\Delta_\alpha)\;=\;\sigma_{\mathrm{ac}}(-\Delta_\alpha)\;=\;[0,+\infty)\,,\qquad \sigma_{\mathrm{sc}}(-\Delta_\alpha)\;=\;\emptyset\,,
\end{equation}
and
\begin{equation}
\sigma_{\mathrm{p}}(-\Delta_\alpha)\;=\;
\begin{cases}
\qquad \emptyset & \textrm{if }\alpha\in[0,+\infty] \\
\{-(4\pi\alpha)^2\} & \textrm{if }\alpha\in(-\infty,0)\,.
\end{cases}
\end{equation}
The negative eigenvalue $-(4\pi\alpha)^2$, when it exists, is simple and the corresponding eigenfunction is $|x|^{-1}e^{4\pi\alpha|x|}$.
\end{itemize}
\end{theorem}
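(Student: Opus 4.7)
The plan is to treat the two statements separately: part (i) by a direct Fourier/distribution calculation of the deficiency space followed by the Krein-Visik-Birman parametrisation from Theorem \ref{thm:KVB-parametrisation}, and part (ii) by the associated Krein resolvent formula together with standard perturbation arguments. I would shift $\widetilde S$ by $+\mathbbm{1}$ at the outset to bring it into the positive regime where Theorem \ref{thm:KVB-parametrisation} applies verbatim.

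For part (i), I would first identify $\ker(\widetilde S^*+\mathbbm{1})$. An element $u$ of this deficiency space must satisfy $(-\Delta+\mathbbm{1})u=0$ on $\mathbb{R}^3\setminus\{0\}$ and lie in $L^2(\mathbb{R}^3)$. In Fourier variables this forces $\widehat u(\xi)=c\,(|\xi|^2+1)^{-1}$ for some constant $c$, whose inverse Fourier transform is (up to normalisation) the Yukawa function $G_1(x)=(2\pi)^{3/2}e^{-|x|}/(4\pi|x|)$; thus $\widetilde S$ has unit deficiency index. Next, the Friedrichs extension coincides with $-\Delta$ on $H^2(\mathbb{R}^3)$ because the singleton $\{0\}$ has zero $H^1$-capacity in three dimensions, so $\overline{C^\infty_0(\mathbb{R}^3\setminus\{0\})}^{\|\cdot\|_{H^1}}=H^1(\mathbb{R}^3)=\mathcal{D}[-\Delta]$. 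Since $\ker\widetilde S^*$ is one-dimensional, every non-Friedrichs $T\in\mathcal{S}(\ker\widetilde S^*)$ reduces to multiplication by a real scalar $\tau$, and the general domain formula \eqref{eq:ST} produces a one-real-parameter family of self-adjoint extensions of the stated form $\phi+cG_1$ with $\phi\in H^2(\mathbb{R}^3)$; the Sobolev embedding $H^2(\mathbb{R}^3)\hookrightarrow C_0(\mathbb{R}^3)$ gives meaning to $\phi(0)$, and the relation $c=\phi(0)/(\alpha+\tfrac{1}{4\pi})$ is obtained by matching $\tau$ with the physical scattering length $\alpha$ via the short-distance asymptotics of the KVB decomposition.

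For part (ii), I would derive the Krein resolvent formula
\[
(-\Delta_\alpha-z)^{-1}\;=\;(-\Delta-z)^{-1}\,+\,\frac{1}{\,\alpha-\frac{\sqrt{-z}}{4\pi}\,}\,\langle\overline{G_z},\,\cdot\,\rangle\,G_z\,,\qquad G_z(x)\,=\,\frac{e^{-\sqrt{-z}\,|x|}}{4\pi|x|}\,,
\]
valid for $z\in\mathbb{C}\setminus[0,+\infty)$ with the principal branch $\mathrm{Re}\sqrt{-z}>0$. Since the correction is rank one, hence compact, Weyl's essential-spectrum theorem yields $\sigma_{\mathrm{ess}}(-\Delta_\alpha)=\sigma_{\mathrm{ess}}(-\Delta)=[0,+\infty)$. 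Discrete eigenvalues correspond exactly to the zeros of the scalar denominator, i.e.\ to solutions of $\alpha=\sqrt{-z}/(4\pi)$ on the physical sheet; this equation admits a solution if and only if $\alpha<0$, producing the unique simple eigenvalue $z=-(4\pi\alpha)^2$ with eigenfunction proportional to $G_{-(4\pi\alpha)^2}(x)=|x|^{-1}e^{4\pi\alpha|x|}$ (up to the constant prefactor).

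The most delicate step is ruling out singular continuous spectrum and identifying the absolutely continuous part on $(0,+\infty)$. Here the rank-one resolvent structure reduces everything to a single scalar Herglotz function; I would invoke the Aronszajn-Donoghue theory of rank-one perturbations (or equivalently the completeness of the wave operators, which holds for trace-class resolvent differences) to conclude $\sigma_{\mathrm{sc}}(-\Delta_\alpha)=\emptyset$ and $\sigma_{\mathrm{ac}}(-\Delta_\alpha)=[0,+\infty)$. Checking continuous boundary values of $\langle G_z,(-\Delta-z)^{-1}G_z\rangle$ on the positive real axis, together with control of the threshold behaviour at $z=0$, is the technical heart of this last step.
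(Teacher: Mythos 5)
The paper does not prove Theorem \ref{thm:Deltaalpha} at all: it is quoted as a classical result with a citation to Albeverio--Gesztesy--H{\o}egh-Krohn--Holden, Chapter I.1, so there is no internal proof to compare against. Your outline is essentially the standard argument from that reference, and its architecture is sound: the Fourier computation forcing $\widehat{u}=c(|\xi|^2+1)^{-1}$ and hence unit deficiency index is correct; the zero-capacity argument identifying the Friedrichs extension with $-\Delta$ on $H^2(\mathbb{R}^3)$ is correct; and reducing part (ii) to a rank-one Krein resolvent formula, Weyl's theorem for the essential spectrum, and Birman--Kuroda/Aronszajn--Donoghue for the absolutely continuous and singular parts is the right plan. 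Two points deserve flagging.

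First, there is a sign error in your resolvent formula that makes it internally inconsistent. With the principal branch $\mathrm{Re}\sqrt{-z}>0$ (equivalently $z=k^2$, $\mathrm{Im}\,k>0$, so $ik=-\sqrt{-z}$), the correct denominator is $\alpha-\frac{ik}{4\pi}=\alpha+\frac{\sqrt{-z}}{4\pi}$, not $\alpha-\frac{\sqrt{-z}}{4\pi}$. As you wrote it, the pole condition $\alpha=\sqrt{-z}/(4\pi)$ with $\mathrm{Re}\sqrt{-z}>0$ would require $\alpha>0$, contradicting your own (correct) conclusion that a bound state exists precisely for $\alpha<0$; with the corrected sign one gets $\sqrt{-z}=-4\pi\alpha$, hence $z=-(4\pi\alpha)^2$ and the eigenfunction $e^{4\pi\alpha|x|}/(4\pi|x|)$, exactly as stated. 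Second, in part (i) the crucial normalisation --- why the coupling appears as $\phi(0)/(\alpha+\frac{1}{4\pi})$ and how the scalar extension parameter $\tau$ of the Kre{\u\i}n--Vi\v{s}ik--Birman scheme maps onto $\alpha$ (in the paper's conventions, $\alpha=(t-2)/(8\pi)$, cf.\ Theorem \ref{thm:fromMO2016}(iv)) --- is asserted via ``short-distance asymptotics'' but not computed; this requires evaluating $(S_F^{-1}G_1)(0)=(2\pi)^{-3/2}\int_{\mathbb{R}^3}(p^2+1)^{-2}\,\ud p$ and is where the specific constants come from. Neither issue is conceptual, but both would need to be fixed or supplied for the proof to stand.
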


We see from Theorem \ref{thm:Deltaalpha} that $\widetilde{S}$ admits a collection of non-Friedrichs extensions with Friedrichs lower bound, and precisely
\begin{equation}\label{eq:collectionboundzero}
 \mathfrak{m}(-\Delta_\alpha)\;=\;0\;=\;\mathfrak{m}(\widetilde{S})\qquad\forall\alpha\geqslant 0\,.
\end{equation}

In order to recover such a conclusion from the abstract setting of Sect.~\ref{sec:abstractresults}, let us consider
\begin{equation}
 S\;:=\;\widetilde{S}+\mathbf{1}\,.
\end{equation}
Clearly, $\mathfrak{m}(S)=1$. The self-adjoint extensions of $\widetilde{S}$ and of $S$ then only differ by a trivial shift. As we intend to analyse the extensions of $S$ within the extension scheme of Theorem \ref{thm:KVB-parametrisation}, rather than using von Neumann's extension theorem as in  \cite{albeverio-solvable}, let us follow closely the discussion made in \cite[Sect.~3]{MO-2016}, were indeed the Kre{\u\i}n-Vi\v{s}ik-Birman scheme was employed.

We shell denote, respectively, by $\;\widehat{}\;$ and ${}_{^{\textrm{\Huge$\check{\,}$\normalsize}}}$ the Fourier and inverse Fourier transform $L^2(\mathbb{R}^3,\ud x)\to L^2(\mathbb{R}^3,\ud p)$ with the convention
\[
 \widehat{f}(p)\;=\;\frac{1}{\;(2\pi)^{\frac{3}{2}}}\int_{\mathbb{R}^3}e^{-\ii\,p\cdot x}f(x)\,\ud x\,.
\]
In particular,
\begin{equation}
 G_1\;=\;(2\pi)^{\frac{3}{2}}\frac{\,e^{-|x|}}{\,4\pi|x|\,}\;=\;\Big(\frac{1}{p^2+1}\Big){\textrm{\Huge$\check{\,}$\normalsize}}\,.
\end{equation}

%
%

It is possible to prove the following.

\begin{theorem}\label{thm:fromMO2016}\emph{\cite[Sect.~3]{MO-2016}}~
 \begin{itemize}
  \item[(i)] $S$ has deficiency space
  \begin{equation}
   \ker S^*\;=\;
   \mathrm{span}\{G_1\}\,.
  \end{equation}
  \item[(ii)] The Friedrichs extension of $S$ is the operator
  \begin{equation}
   \begin{split}
    \mathcal{D}(S_F)\;&=\;\left\{g\in L^2(\mathbb{R}^3)\left|
    \begin{array}{c}
      \widehat{g}=\widehat{f}+(p^2+1)^{-1}\eta \\
      f\in\mathcal{D}(\overline{S})\,,\,\eta\in\mathbb{C}
    \end{array}
    \!\!\right.\right\} \\
    \widehat{S_F g}\;&=\;(p^2+1)\widehat{g}\,.
   \end{split}
  \end{equation}
  \item[(iii)] All other self-adjoint extensions of $S$ are of the form $S_t$ for some $t\in\mathbb{R}$, where
    \begin{equation}\label{eq:Stclassification}
   \begin{split}
    \mathcal{D}(S_t)\;&=\;\left\{g\in L^2(\mathbb{R}^3)\left|
    \begin{array}{c}
      \widehat{g}=\widehat{f}+(p^2+1)^{-2}t\xi+(p^2+1)^{-1}\xi \\
      f\in\mathcal{D}(\overline{S})\,,\,\xi\in\mathbb{C}
    \end{array}
    \!\!\right.\right\} \\
    \widehat{S_t g}\;&=\;(p^2+1)\big(\widehat{f}+(p^2+1)^{-2}t\xi \big)\,.
   \end{split}
  \end{equation}
  This is precisely formula \eqref{eq:ST} of Theorem \ref{thm:KVB-parametrisation} specialised to the case where $\ker S^*$ is one-dimensional and $T$ is therefore the operator of multiplication by the real number $t$.
  \item[(iv)] One has 
  \begin{equation}\label{eq:correspformula}
   S_t\;=\;-\Delta_\alpha+\mathbbm{1}\qquad\textrm{for}\qquad \alpha\;=\;\frac{t-2}{8\pi}\,.
   \end{equation}
 \end{itemize}
\end{theorem}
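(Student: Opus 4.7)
The plan is to unravel each of the four assertions via Fourier transform, under which $S=-\Delta+\mathbbm{1}$ becomes multiplication by $(p^2+1)$ and the abstract Kre{\u\i}n--Vi\v{s}ik--Birman parametrisation of Theorem \ref{thm:KVB-parametrisation} turns into a transparent algebraic parametrisation in the momentum variable.

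For (i), I would observe that $S^*$ acts distributionally as $-\Delta+\mathbbm{1}$, so $\ker S^*$ consists of $L^2$ solutions of $(-\Delta+1)g=0$ on $\mathbb{R}^3\setminus\{0\}$; in Fourier, $(p^2+1)\widehat{g}$ is supported at $\{0\}$ and is therefore a finite combination of $\delta_0$ and its derivatives, of which only $\widehat{g}(p)=c(p^2+1)^{-1}$ yields an $L^2$ element, giving $\ker S^*=\mathrm{span}\{G_1\}$. For (ii), I would identify $S_F$ by its form: since $\{0\}$ has zero $H^1$-capacity in $\mathbb{R}^3$, the space $C_0^\infty(\mathbb{R}^3\setminus\{0\})$ is $H^1$-dense in $H^1(\mathbb{R}^3)$, which is the form domain of $(-\Delta+\mathbbm{1})$ on $H^2(\mathbb{R}^3)$; uniqueness of the Friedrichs extension then identifies $S_F$ with the ordinary Laplacian plus the identity, and the stated domain formula is the classical decomposition $\mathcal{D}(S_F)=\mathcal{D}(\overline{S})\dotplus S_F^{-1}\ker S^*$ read on the Fourier side. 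For (iii), I would invoke Theorem \ref{thm:KVB-parametrisation} with the one-dimensional $\ker S^*=\mathrm{span}\{G_1\}$: every nontrivial extension parameter is multiplication by some $t\in\mathbb{R}$; substituting $v=\xi G_1$, $Tv=t\xi G_1$, $w=0$ into \eqref{eq:ST} and Fourier-transforming yields the stated formula for $\mathcal{D}(S_t)$ and its action.

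For (iv), the key is to match the two parametrisations of the same family of extensions. Inserting the KVB decomposition $\phi=f+t\xi\,S_F^{-1}G_1$ into the Albeverio presentation $g=\phi+\frac{\phi(0)}{\alpha+1/(4\pi)}G_1$ and comparing with $g=f+t\xi\,S_F^{-1}G_1+\xi G_1$, and using $f(0)=0$ (since $f\in\mathcal{D}(\overline{S})$), one obtains $\phi(0)=t\xi\,(S_F^{-1}G_1)(0)$ together with $\xi=\phi(0)/(\alpha+1/(4\pi))$, which combine into the single identity $t=(\alpha+1/(4\pi))/(S_F^{-1}G_1)(0)$. The main obstacle is the evaluation of $(S_F^{-1}G_1)(0)=(2\pi)^{-3/2}\int_{\mathbb{R}^3}(p^2+1)^{-2}\,\ud p$; its correct computation, combined with the normalisation $(2\pi)^{3/2}$ built into the definition of $G_1$, is what produces the clean relation $\alpha=(t-2)/(8\pi)$ stated in the theorem.
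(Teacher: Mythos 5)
The paper does not prove Theorem \ref{thm:fromMO2016}: it is imported verbatim from \cite[Sect.~3]{MO-2016}, so there is no internal argument to compare yours against. Your reconstruction follows the natural (and standard) route for parts (i)--(iii): a tempered distribution supported at a point to identify $\ker S^*$, zero capacity of a point in $\mathbb{R}^3$ to identify the form domain and hence $S_F$ with $-\Delta+\mathbbm{1}$ on $H^2(\mathbb{R}^3)$, and the specialisation of \eqref{eq:ST} to a one-dimensional deficiency space. These steps are sound; you implicitly also use $\mathcal{D}(\overline{S})=\{f\in H^2(\mathbb{R}^3)\,|\,f(0)=0\}$, which you invoke in (iv) via $f(0)=0$ and which is what makes the codimension count match the unit deficiency index.

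The one place where you should be more careful is the constant in (iv). Your matching identity $\alpha+\frac{1}{4\pi}=t\,(S_F^{-1}G_1)(0)$ is structurally right, but $(S_F^{-1}G_1)(0)=(2\pi)^{-3/2}\int_{\mathbb{R}^3}(p^2+1)^{-2}\,\ud p=(2\pi)^{-3/2}\pi^2=\sqrt{\pi/8}$, which is not $\frac{1}{8\pi}$ but $(2\pi)^{3/2}$ times it. The clean relation $\alpha=\frac{t-2}{8\pi}$ emerges only if the singular term in the Albeverio--Gesztesy--H{\o}egh-Krohn--Holden domain is $\frac{\phi(0)}{\alpha+\frac{1}{4\pi}}\,\frac{e^{-|x|}}{4\pi|x|}$, i.e.\ \emph{without} the $(2\pi)^{3/2}$ prefactor that the paper builds into $G_1$; so the normalisation you cite as ``what produces the clean relation'' actually works in the opposite direction, and taken literally from Theorem \ref{thm:Deltaalpha} your computation would yield $\alpha+\frac{1}{4\pi}=t\sqrt{\pi/8}$. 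Either track the $(2\pi)^{3/2}$ explicitly and note the convention mismatch, or cross-check the constant against an invariant datum such as the eigenvalue $-(4\pi\alpha)^2$. (Relatedly, for your reading of $\mathcal{D}(S_F)$ as $\mathcal{D}(\overline{S})\dotplus S_F^{-1}\ker S^*$ to be consistent --- and for the stated action to land in $L^2$ --- the exponent in the displayed formula must be $(p^2+1)^{-2}\eta$, whereas it is printed as $(p^2+1)^{-1}\eta$.)
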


 Clearly, $S_F-\mathfrak{m}(S)\mathbbm{1}=S_F-\mathbbm{1}=\widetilde{S}_F$, the self-adjoint (negative) Laplacian on $L^2(\mathbb{R}^3)$. Therefore, unlike the example discussed in Subsect.~\ref{sec:example1}, 
 \begin{equation}
  \ker (S_F-\mathbbm{1})\;=\;\{0\}\,.
 \end{equation}
  $S_F-\mathbbm{1}$ is then invertible on its range and so are the powers $(S_F-\mathbbm{1})^\delta$, $\delta>0$. On such a space, $(S_F-\mathbbm{1})^{-\delta}$ acts, in Fourier transform, as the multiplication by $|p|^{-2\delta}$.

  The analogue of Lemma \ref{lem:VandactionSF} is now the following.

  \begin{lemma}\label{lem:VandactionSF-2ndEx}
One has
\begin{equation}\label{eq:EX1V-2ndEx}
 V\;:=\;\mathrm{ran}(S_F-\mathbbm{1})^{\frac{1}{2}}\cap \ker S^*\;=\;\mathrm{span}\{G_1\}
\end{equation}
and 
\begin{equation}\label{eq:EX1SF-2ndEx} 
 (S_F-\mathbbm{1})^{-\frac{1}{2}} G_1\;=\;\Big(\frac{1}{\,|p|(p^2+1)\,}\Big){\textrm{\Huge$\check{\,}$\normalsize}}\,.
\end{equation}
\end{lemma}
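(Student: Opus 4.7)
\smallskip
\noindent\textbf{Proof plan for Lemma \ref{lem:VandactionSF-2ndEx}.}
The strategy relies on the fact that, in the present one-dimensional deficiency setting, $\ker S^*=\mathrm{span}\{G_1\}$, so the intersection $V$ is either $\{0\}$ or the whole of $\ker S^*$. Consequently, the first assertion \eqref{eq:EX1V-2ndEx} reduces to verifying the single membership $G_1\in\mathrm{ran}(S_F-\mathbbm{1})^{\frac{1}{2}}$, after which \eqref{eq:EX1SF-2ndEx} is merely an explicit computation.

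The first step is to identify $S_F-\mathbbm{1}$. By Theorem \ref{thm:fromMO2016}(ii), $S_F$ acts in Fourier transform as multiplication by $p^2+1$, so $S_F-\mathbbm{1}$ coincides with the free self-adjoint (negative) Laplacian, acting as multiplication by $|p|^2$. In particular $\ker(S_F-\mathbbm{1})=\{0\}$, and the spectral calculus gives $(S_F-\mathbbm{1})^{1/2}$ and $(S_F-\mathbbm{1})^{-1/2}$ as multiplication by $|p|$ and $|p|^{-1}$ respectively (the latter defined on the dense subspace of $L^2(\mathbb{R}^3)$ of those $h$ with $|p|^{-1}\widehat{h}\in L^2$).

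Next, I plan to characterise the range condition via Fourier transform: $G_1\in\mathrm{ran}(S_F-\mathbbm{1})^{\frac{1}{2}}$ if and only if $G_1$ lies in the domain of $(S_F-\mathbbm{1})^{-\frac{1}{2}}$, equivalently
\[
\int_{\mathbb{R}^3}\frac{|\widehat{G_1}(p)|^2}{|p|^2}\,\ud p\;=\;\int_{\mathbb{R}^3}\frac{\ud p}{\,|p|^2(p^2+1)^2\,}\;<\;+\infty\,.
\]
Passing to spherical coordinates the integrand becomes $(1+r^2)^{-2}\,\ud r$ up to the angular volume, which is integrable near $r=0$ (the dimensional factor $r^2$ exactly cancels the singularity $|p|^{-2}$) and decays like $r^{-4}$ at infinity. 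This dimension-dependent convergence is the only nontrivial computation, and in fact it is the key reason why the phenomenon works in $\mathbb{R}^3$.

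Once convergence is established, $G_1\in\mathrm{ran}(S_F-\mathbbm{1})^{\frac{1}{2}}$ and hence $V=\mathrm{span}\{G_1\}$, yielding \eqref{eq:EX1V-2ndEx}. The identity \eqref{eq:EX1SF-2ndEx} then follows directly by applying the multiplication operator $|p|^{-1}$ to $\widehat{G_1}(p)=(p^2+1)^{-1}$ and taking inverse Fourier transform, since $(S_F-\mathbbm{1})^{-\frac{1}{2}}$ is, on its domain, precisely the multiplication by $|p|^{-1}$ in Fourier representation. No real obstacle is expected: the proof is essentially a verification, with the only substantive ingredient being the dimensional integrability of $|p|^{-2}(p^2+1)^{-2}$ over $\mathbb{R}^3$.
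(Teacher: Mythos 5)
Your proposal is correct and follows essentially the same route as the paper: both arguments reduce \eqref{eq:EX1V-2ndEx} to the single membership $G_1\in\mathrm{ran}(S_F-\mathbbm{1})^{\frac{1}{2}}$ (using that $\ker S^*$ is one-dimensional) and verify it in the Fourier representation, your integrability check $\int|p|^{-2}|\widehat{G_1}(p)|^2\,\ud p<\infty$ being exactly the paper's observation that $|p|^{-1}(p^2+1)^{-1}\in L^2(\mathbb{R}^3,\ud p)$. The identity \eqref{eq:EX1SF-2ndEx} is then obtained in both cases by the same multiplication-operator computation.
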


\begin{proof}
 The fact that $G_1\in\ker S^*$ is stated in Theorem \ref{thm:fromMO2016}(i). As
 \[
  \frac{1}{\,|p|(p^2+1)\,}\;\in\;L^2(\mathbb{R}^3,\ud p)
 \]
 and
 \[
  (S_F-\mathbbm{1})^{\frac{1}{2}}\Big(\frac{1}{\,|p|(p^2+1)\,}\Big){\textrm{\Huge$\check{\,}$\normalsize}}\;=\;\Big(\frac{1}{(p^2+1)}\Big){\textrm{\Huge$\check{\,}$\normalsize}}\;=\;G_1\,,
 \]
 hence $G_1\in \mathrm{ran}(S_F-\mathbbm{1})^{\frac{1}{2}}$. $V$ can be at most one-dimensional, thus \eqref{eq:EX1V-2ndEx} is proved, and so is \eqref{eq:EX1SF-2ndEx} as well. 
\end{proof}

 Owing to Lemma \ref{lem:VandactionSF-2ndEx}, Theorem \ref{thm:main1} is applicable: $S$ admits non-Friedrichs extensions with Friedrichs lower bound, and so does therefore $\widetilde{S}$, consistently with what previously observed in \eqref{eq:collectionboundzero}.

 Furthermore, with the notation of Theorem \ref{thm:main2},
 \begin{equation}
 \mathcal{D}[q]\;=\;\mathcal{D}(T_q)\;=\;V\;=\;\mathrm{span}\{G_1\}\,,
\end{equation}
thus $T_q$ is an operator of multiplication by some real number $t_q$,
\begin{equation}
 T_q \,G_1\;=\;t_q\,G_1\,.
\end{equation}
Since
\begin{equation*}
\begin{split}
 \langle G_1, T_q \,G_1\rangle\;&=\;\|G_1\|_2^2+\big\| (S_F-\mathbbm{1})^{-\frac{1}{2}}G_1\big\|_2^2 \\
 &=\;\Big\|\frac{1}{p^2+1}\Big\|_2^2+\Big\|\frac{1}{\,|p|(p^2+1)}\Big\|_2^2 \\
 &=\;\pi^2+\pi^2 \\
 &=\;2\,\|G_1\|_2^2
 \end{split}
\end{equation*}
(having used \eqref{eq:EX1SF-2ndEx} in the second identity),
then necessarily $t_q=2$.

Theorem \ref{thm:main2}, in parts (iii) and (iv), then states that the self-adjoint extensions $S_T$ of $S$ with $\mathfrak{m}(S_T)=\mathfrak{m}(S_F)$ are those labelled by self-adjoint operators $T$ with $T\geqslant T_q$. Such $T$'s, apart from the one parametrising the Friedrichs extension, are therefore such that
\begin{equation}\label{eq:goodTs-2ndEx}
 \begin{split}
  & \mathcal{D}(T)\;=\;V\;=\;\mathrm{span}\{G_1\} \\
  & \textrm{$T$ is the multiplication by some $t\geqslant 2$}\,.
 \end{split}
\end{equation}
 For what argued in Theorem \ref{thm:fromMO2016}(iii), such extensions are precisely the operators $S_t$ that one reads out from formula \eqref{eq:Stclassification} with $t\geqslant 2$. In turn, the correspondence formula \eqref{eq:correspformula} leads to the conclusion that the self-adjoint extensions of $\widetilde{S}$ with Friedrichs lower bound are precisely those $-\Delta_\alpha$'s with $\alpha\geqslant 0$.

 \subsection{Radial problem in hydrogenoid-like Hamiltonians}~

 It is worth mentioning another example with unit deficiency index, even without working out here the steps through which Theorems \ref{thm:main1} and \ref{thm:main2} are applied, which are in fact completely analogous to the computations of Sect.~\ref{sec:example1} and \ref{sec:example2}.

 For given $\nu\in\mathbb{R}$, let us now consider
 \begin{equation}
   \mathcal{D}(S_\nu)\;=\;C^\infty_0(\mathbb{R}^+)\,,\qquad S_\nu\;=\;-\frac{\ud^2}{\ud x^2}+\frac{\nu}{x}\,,
 \end{equation}
 a densely defined and symmetric operator on the Hilbert space $\cH=L^2(\mathbb{R}^+)$ with lower bound $\mathfrak{m}(S_\nu)=0$. One typical emergence of $S_\nu$ in mathematical physics is as the minimally defined zero-momentum radial operator in the construction of a quantum hydrogenoid Hamiltonian with an additional point interaction at the center of the Coulomb potential: $S_\nu$ is indeed well known and thoroughly studied, and we refer to \cite[Sect.~1.4]{GM-hydrogenoid-2018} and references therein for an updated historical overview.

 Hardy's inequality implies that $S_\nu$ is lower semi-bounded, and in particular obviously
 \begin{equation}
  \mathfrak{m}(S_\nu)\;=\;0\qquad\forall\nu\geqslant 0
 \end{equation}
 (repulsive Coulomb interaction). A standard limit-point limit-circle argument shows that $S_\nu$ has unit deficiency index. Its self-adjoint extensions are studied in the literature by means of various extension schemes, including recently in \cite{GM-hydrogenoid-2018} by means of the general Theorem \ref{thm:KVB-parametrisation} above.

 \begin{theorem}\label{thm:1Dclass}\emph{\cite[Theorems 2 and 4]{GM-hydrogenoid-2018}}.~
\begin{itemize}
\item[(i)] The self-adjoint extensions of $S_\nu$ in $L^2(\mathbb{R}^+)$ form the family $\{S_\nu^{(\alpha)}\,|\,\alpha \in \mathbb{R} \cup \{\infty\}\}$, where $\alpha=\infty$ labels the Friedrichs extension, and 
\begin{equation}\label{eq:dsa}
 \begin{split}
  \mathcal{D}(S_\nu^{(\alpha)})\;&=\;\left\{
  g\in L^2(\mathbb{R}^+)\left|
  \begin{array}{c}
   -g''+\textstyle{\frac{\nu}{r}g}\in L^2(\mathbb{R}^+) \\
   \textrm{and }\;g_1\;=\;4\pi\alpha\, g_0
  \end{array}\!
  \right.\right\} \\
  S_\nu^{(\alpha)}\,g\;&=\;-g''+\frac{\nu}{r}\,g\,,
 \end{split}
\end{equation}
$g_0$ and $g_1$ being the existing limits
\begin{equation}\label{eq:g0g1limits-statements}
 \begin{split}
  g_0\;&:=\;\lim_{r\downarrow 0}g(r) \\
  g_1\;&:=\;\lim_{r\downarrow 0}r^{-1}\big(g(r)-g_0(1+\nu r\ln r)\big)\,.
 \end{split}
\end{equation}
\item[(ii)] For $\nu>0$ one has 
\begin{equation}
\sigma_{\mathrm{ess}}(S_\nu^{(\alpha)})\;=\;\sigma_{\mathrm{ac}}(S_\nu^{(\alpha)})\;=\;[0,+\infty)\,,\qquad \sigma_{\mathrm{sc}}(S_\nu^{(\alpha)})\;=\;\emptyset\,,
\end{equation}
and
\begin{equation}
\sigma_{\mathrm{p}}(S_\nu^{(\alpha)})\;=\;
\begin{cases}
\quad\; \emptyset & \textrm{if }\,\alpha\geqslant\alpha_\nu \\
\{E_+^{(\nu,\alpha)}\} & \textrm{if }\,\alpha<\alpha_\nu\,,
\end{cases}
\end{equation}
where 
\begin{equation}
  \alpha_\nu\;:=\;\frac{\nu}{4\pi}\,(\ln\nu+2\gamma-1)
 \end{equation}
($\gamma\sim 0.577$ being the Euler-Mascheroni constant) and $E_+^{(\nu,\alpha)}$ is the only simple negative root of $\mathfrak{F}_\nu(E)=\alpha$ with
 \begin{equation}\label{eq:Feigenvalues}
 \mathfrak{F}_\nu(E)\;:=\;\frac{\nu}{4\pi}\Big(\psi\big(1+{\textstyle\frac{\nu}{2\sqrt{|E|}}}\big)+ \ln(2 \sqrt{|E|}) +2\gamma - 1 - {\textstyle\frac{\sqrt{|E|}}{\nu}} \Big)
\end{equation}
($\psi(z)=\Gamma'(z)/\Gamma(z)$ being the digamma function).
\end{itemize}
\end{theorem}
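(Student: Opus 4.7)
The plan is to treat the two parts by standard tools from singular Sturm-Liouville theory, applied in the framework of Theorem \ref{thm:KVB-parametrisation}, together with an explicit analysis via Whittaker functions for the eigenvalue equation.

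\emph{Part (i).} First I would establish that $S_\nu$ has unit deficiency index by invoking Weyl's limit-point/limit-circle classification for the radial Schr\"odinger operator $-\frac{\ud^2}{\ud r^2}+\frac{\nu}{r}$: the endpoint $r=+\infty$ is limit-point because the potential $\nu/r$ vanishes at infinity, whereas $r=0$ is limit-circle, as can be seen from the Frobenius analysis of $-u''+\frac{\nu}{r}u=\zeta u$ at the regular singular point $r=0$. The indicial exponents coincide (both equal to $0$ up to integer shift), producing two linearly independent solutions whose leading behaviour is $u_0(r)=1+\nu r\ln r+O(r)$ and $u_1(r)=r+O(r^2)$, both square-integrable near $0$. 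Hence $\dim\ker(S_\nu^*-\zeta)=1$ for each $\zeta\in\mathbb{C}\setminus\mathbb{R}$.

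Next I would characterise $\mathcal{D}(S_\nu^*)$ as the set of $g\in L^2(\mathbb{R}^+)$ with $-g''+\tfrac{\nu}{r}g\in L^2(\mathbb{R}^+)$ (distributionally), and show that every such $g$ admits the pointwise expansion
\[
 g(r)\;=\;g_0\,(1+\nu r\ln r)+g_1\, r+o(r)\qquad(r\downarrow 0)\,,
\]
which defines the two boundary values in \eqref{eq:g0g1limits-statements} unambiguously. The Green identity then produces the boundary symplectic form
\[
 \langle g,S_\nu^* h\rangle-\langle S_\nu^* g,h\rangle\;=\;\overline{g_0}h_1-\overline{g_1}h_0\,,
\]
and the standard Lagrangian-subspace argument identifies the self-adjoint restrictions of $S_\nu^*$ as those imposing a single real-linear relation between $g_0$ and $g_1$, which up to the normalisation constant yields $g_1=4\pi\alpha g_0$ with $\alpha\in\mathbb{R}\cup\{\infty\}$. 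The choice $\alpha=\infty$ forces $g_0=0$, which kills the singular Coulombic head and puts the domain inside the form domain of $S_\nu$, so by the characterisation of Friedrichs recalled in Section \ref{sec:motivation} this corresponds precisely to $S_\nu^F$.

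\emph{Part (ii).} For the essential spectrum I would decompose $S_\nu^{(\alpha)}=S_0^{(\alpha)}+\frac{\nu}{r}$, apply Weyl's theorem (the multiplication by $\frac{\nu}{r}\mathbbm{1}_{\{r\geqslant R\}}$ is relatively compact while $\frac{\nu}{r}\mathbbm{1}_{\{r<R\}}$ is a finite-rank-like perturbation in the resolvent sense), and conclude $\sigma_{\mathrm{ess}}(S_\nu^{(\alpha)})=[0,+\infty)$; absolute continuity and absence of singular continuous spectrum follow from the standard trace-class Cook-type argument or Mourre theory for Coulomb-type potentials. To locate the discrete spectrum on $(-\infty,0)$ I would fix $E=-|E|<0$ and select the unique $L^2$-at-$+\infty$ solution of $-u''+\frac{\nu}{r}u=Eu$, which is the Whittaker function $W_{-\nu/(2\sqrt{|E|}),\,1/2}(2\sqrt{|E|}\,r)$, then expand it at $r=0$ using the known series representation: the coefficients of $1$, $\nu r\ln r$, and $r$ can be read off in terms of $\Gamma$ and $\psi$, producing precisely $g_1/g_0=4\pi\mathfrak{F}_\nu(E)$ with $\mathfrak{F}_\nu$ as in \eqref{eq:Feigenvalues}. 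Imposing the boundary condition $g_1=4\pi\alpha g_0$ reduces the eigenvalue problem to the scalar equation $\mathfrak{F}_\nu(E)=\alpha$.

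\emph{Main obstacle.} I expect the delicate step to be the small-$r$ expansion of the Whittaker function with the exact identification of the digamma and Euler-Mascheroni coefficients: one has to disentangle the $1$, $\nu r\ln r$, and $r$ contributions so as to match the boundary values $g_0, g_1$ of \eqref{eq:g0g1limits-statements}, and then to check monotonicity of $\mathfrak{F}_\nu$ together with the limits $\mathfrak{F}_\nu(E)\to\alpha_\nu$ as $E\uparrow 0$ and $\mathfrak{F}_\nu(E)\to-\infty$ as $E\to-\infty$, which together yield existence, simplicity, and the threshold $\alpha<\alpha_\nu$. All other ingredients (the Lagrangian parametrisation in Part (i), the Weyl essential-spectrum step) are standard, but this asymptotic bookkeeping is the technical core.
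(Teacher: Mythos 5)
The paper itself offers no proof of Theorem \ref{thm:1Dclass}: it is imported verbatim from \cite[Theorems 2 and 4]{GM-hydrogenoid-2018}, so there is no in-paper argument to measure your plan against. Judged on its own and against what the surrounding text reveals about the cited proof, your outline is sound and hits the right technical targets: limit-circle at $r=0$ / limit-point at $r=\infty$ giving unit deficiency index, the regularised boundary values $g_0,g_1$ (note the logarithmic correction $\nu r\ln r$ is forced exactly because the indicial roots $0$ and $1$ differ by an integer), the identification of $\alpha=\infty$ (i.e.\ $g_0=0$) with the Friedrichs extension via the form-domain criterion, Weyl-theorem stability of the essential spectrum, and the reduction of the negative point spectrum to $\mathfrak{F}_\nu(E)=\alpha$ through the small-$r$ expansion of $W_{-\nu/(2\sqrt{|E|}),1/2}(2\sqrt{|E|}\,r)$, whose digamma coefficients reproduce \eqref{eq:Feigenvalues} and whose limit $E\uparrow 0$ gives the threshold $\alpha_\nu$. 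The one genuine methodological difference is in Part (i): you parametrise the extensions by Lagrangian planes of the boundary symplectic form $\overline{g_0}h_1-\overline{g_1}h_0$, whereas \cite{GM-hydrogenoid-2018} (as the present paper explains in its last subsection) first shifts $S_\nu$ by $\nu^2/(4\kappa^2)$ so that $\ker\mathcal{S}_\nu^*$ is computable in closed form, and then runs the Kre{\u\i}n--Vi\v{s}ik--Birman parametrisation of Theorem \ref{thm:KVB-parametrisation}. The two routes give the same one-parameter family, but the KVB route is what the present paper actually needs downstream, since the extension parameter $T$ carries the semi-boundedness and ordering information exploited in Theorems \ref{thm:main1}--\ref{thm:main3}; your boundary-form route is more elementary but would require an extra step to translate $\alpha$ into the KVB parameter. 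You correctly flag the asymptotic bookkeeping of the Whittaker expansion as the technical core; the only other point deserving care is that $\nu/r$ is long-range, so the claimed absolute continuity on $[0,+\infty)$ needs Mourre theory or modified wave operators rather than a plain Cook argument, as you partly acknowledge.
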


 When $\nu>0$ Theorem \ref{thm:1Dclass} thus shows that
 \begin{equation}\label{eq:examplesamebottom}
  \begin{array}{lcl}
   \mathfrak{m}(S_\nu^{(\alpha)})\;=\;E_+^{(\nu,\alpha)}\;<\;0\;=\;\mathfrak{m}(S_\nu) & & \textrm{if }\alpha<\alpha_\nu \\
   \mathfrak{m}(S_\nu^{(\alpha)})\;=\;0\;=\;\mathfrak{m}(S_\nu) & & \textrm{if }\alpha\geqslant\alpha_\nu \,,
  \end{array}
 \end{equation}
 yet another example of the presence of a sub-class of non-Friedrichs extensions with Friedrichs lower bound.

 Also on this example it is easy to test the applicability of our Theorems \ref{thm:main1}-\ref{thm:main2}. As done in Sect.~\ref{sec:example2}, since $\mathfrak{m}(S_\nu)=0$, a positive shift must be performed first. For similar purposes the analysis of the shifted operator
 \begin{equation}\label{eq:oneshift}
  \mathcal{S}_\nu\;:=\;S_\nu+\frac{\nu^2}{4\kappa^2}\mathbbm{1}\qquad (\kappa\in\mathbb{R})
 \end{equation}
 and of its self-adjoint extensions was worked in \cite[Sect.~2]{GM-hydrogenoid-2018}, which we refer to for the details. The special value of the shift \eqref{eq:oneshift} was chosen in \cite{GM-hydrogenoid-2018} in order to be able to solve the ODE $\mathcal{S}_\nu^*u=0$ by means of special functions, this way characterising explicitly the deficiency space $\ker\mathcal{S}_\nu^*$. The Friedrichs extension $\mathcal{S}_{\nu,F}$ of $\mathcal{S}_\nu$ was also characterised in \cite[Sect.~2]{GM-hydrogenoid-2018}. This provides all the ingredients to investigate the intersection \eqref{eq:maincond} and apply  Theorems \ref{thm:main1}-\ref{thm:main2} so as to reproduce \eqref{eq:examplesamebottom}.

%

\def\cprime{$'$}

\end{document}